\documentclass{amsart}
\usepackage{amssymb}
\usepackage[pagebackref]{hyperref}
\usepackage{tikz}

\title{Vieta jumping and small norms in quadratic number fields}
\author{Franz Lemmermeyer}
\address{M\"orikeweg 1, 73489 Jagstzell, Germany}
\email{franz.lemmermeyer@gmx.de}

\newtheorem{lemma}{Lemma}
\newtheorem{prop}[lemma]{Proposition}
\newtheorem{theorem}[lemma]{Theorem}

\newcommand{\rsp}{\raisebox{0em}[2.7ex][1.3ex]{\rule{0em}{2ex} }}

\newcommand{\N}{{\mathbb N}}
\newcommand{\Z}{{\mathbb Z}}
\newcommand{\Q}{{\mathbb Q}}

\newcommand{\lra}{\longrightarrow}

\newcommand{\lrs}{\stackrel{\sharp}{\lra}}
\newcommand{\lrf}{\stackrel{\flat}{\lra}}

\newcommand{\eps}{\varepsilon}

\newcommand{\cC}{{\mathcal C}}
\newcommand{\cM}{{\mathcal M}}
\newcommand{\cO}{{\mathcal O}}
\newcommand{\cP}{{\mathcal P}}
\newcommand{\OO}{{\mathcal O}}
\newcommand{\fp}{\mathfrak p}

\begin{document}

\begin{abstract}
  In this article we explain the connection between the famous problem
  from the IMO 1988 and elements of small norms in quadratic number
  fields with parametrized units.
\end{abstract}

\maketitle

\section*{Introduction}
The following problem\footnote{See \cite{LQNF}.} was posed
at the International Mathematical Olympiad in 1988. 

\begin{quote}
  {\em Let $a$ and $b$ be positive integers such that $ab + 1$ divides
    $a^2 + b^2$. Prove that $\frac{a^2 + b^2}{ab+1}$ is a perfect square.}
\end{quote}

If we set $\frac{a^2 + b^2}{ab+1} = k$, then the statement is
equivalent to the claim that if $(a,b)$ is an integral point on the
conic
\begin{equation}\label{EC0}
  \cC_k: a^2 - kab + b^2 = k,
\end{equation}
then $k$ is a perfect square.

In this article we will give several proofs of this result,
point out connections with elements of small numbers in quadratic
number fields of Richaud-Degert type, and present various generalizations.
It is our intention to show that Vieta jumping is a technique that may
be used outside of olympiad problems.

\section{Vieta Jumping and an IMO Problem}

Before we look at the solution of this IMO problem we determine all solutions
in integers and rational numbers when the quotient is a perfect square.
If $\frac{a^2 + b^2}{ab+1} = m^2$, then $(a,b)$ is an integral point
on the curve
\begin{equation}\label{EC1}
  \cC_{m^2}: a^2 - m^2 ab + b^2 = m^2.
\end{equation}

For finding all rational points on $\cC_{m^2}$ we compute the points of
intersection of $\cC_{m^2}$ and the lines $\ell_t: y = t(x-m)$ through the
point $(m,0)$ and find $(x,y)$ with
\begin{equation}\label{Prat}
  x = \frac{m(t^2-1)}{t^2 - m^2t + 1}, \quad
  y = \frac{m(m^2t^2 - 2t)}{t^2 - m^2t + 1}
\end{equation}
as the second point of intersection $(x,y)$ of $\ell_t$ with $\cC_{m^2}$.

\begin{prop}
  The rational points $(x,y) \ne (m,0)$ on the conic $\cC_{m^2}$ are
  parametrized by (\ref{Prat}); the point $(m,0)$ corresponds to $t = \infty$.
\end{prop}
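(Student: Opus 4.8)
The plan is to prove this by the classical projection method that already produced (\ref{Prat}): since $\cC_{m^2}$ is a nondegenerate conic carrying the rational point $(m,0)$, projection from that point should set up a bijection between the pencil of lines through $(m,0)$ and the rational points of $\cC_{m^2}$. First I would record that $(m,0)$ really lies on $\cC_{m^2}$, because $m^2-m^2\cdot m\cdot 0+0=m^2$, and that every non-vertical line through $(m,0)$ is $\ell_t:\,y=t(x-m)$ for a unique $t\in\Q$, the vertical line being the limiting case $t=\infty$.

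The computational core is to substitute $y=t(x-m)$ into (\ref{EC1}). This yields a quadratic in $x$,
\begin{equation*}
  (t^2-m^2t+1)\,x^2 - mt(2t-m^2)\,x + m^2(t^2-1)=0,
\end{equation*}
whose two roots are the $x$-coordinates of the two intersection points of $\ell_t$ with $\cC_{m^2}$. Since $(m,0)$ lies on both curves, $x=m$ is always one root; Vieta's formula for the product of the roots then gives $m^2(t^2-1)/(t^2-m^2t+1)$, and dividing out the known factor $m$ produces the second coordinate $x=m(t^2-1)/(t^2-m^2t+1)$. Feeding this back into $y=t(x-m)$ and simplifying returns the expression for $y$ in (\ref{Prat}). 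This is routine algebra; the only points needing care are sign bookkeeping and the observation that when the leading coefficient $t^2-m^2t+1$ vanishes the quadratic drops in degree --- this merely reflects that $\ell_t$ is then parallel to an asymptote and meets $\cC_{m^2}$ in a single affine point, the other intersection having escaped to infinity.

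It remains to check that (\ref{Prat}) is a bijection onto the rational points. Injectivity is immediate, since a finite parameter is recovered from its image as the slope $t=y/(x-m)$. For surjectivity, let $(x,y)$ be rational. If $x\ne m$, then $t:=y/(x-m)\in\Q$, the line through $(m,0)$ and $(x,y)$ is exactly $\ell_t$, and $(x,y)$ is its second intersection with $\cC_{m^2}$, hence is given by (\ref{Prat}). The rational points with $x=m$ are obtained by setting $x=m$ in (\ref{EC1}), which forces $y(y-m^3)=0$; apart from the base point $(m,0)$ --- itself recovered at the tangent slope $t=2/m^2$ --- the only such point is $(m,m^3)$, which lies on no finite-slope line through $(m,0)$ and is therefore attained only in the limit $t\to\infty$, as the ratio of leading coefficients in (\ref{Prat}) confirms. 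I do not anticipate any real obstacle: the whole argument is elementary, and the only delicate bookkeeping concerns the exceptional parameters $t=\infty$ and the asymptotic slopes with $t^2-m^2t+1=0$, which correspond respectively to the vertical line and to the points at infinity of $\cC_{m^2}$.
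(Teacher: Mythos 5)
Your argument is correct and is essentially the derivation the paper itself sketches immediately before the Proposition (which it then states without further proof): intersect $\ell_t$ with $\cC_{m^2}$, factor out the known root $x=m$ via Vieta, and invert the parametrization through $t = y/(x-m)$. One remark worth keeping: your careful bookkeeping of the exceptional parameters actually exposes a slip in the statement as printed --- your computation correctly shows that $t=\infty$ (the vertical line $x=m$) yields the point $(m,m^3)$, while the base point $(m,0)$ is recovered at the finite tangent slope $t=2/m^2$, whereas the Proposition asserts that $(m,0)$ is the point corresponding to $t=\infty$. Note also that for a positive integer $m$ the denominator $t^2-m^2t+1$ has no rational roots, so the degenerate ``asymptote'' case you flag never occurs for $t\in\Q$.
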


Finding all integral solutions of (\ref{EC1}) is more difficult.  The
obvious integral solutions are $(a,b) = (\pm m, 0)$ and $(0, \pm m)$.
The idea is constructing new integral points from known ones using
``Vieta jumping'', and then reversing this procedure and showing that
each integral point is generated by the obvious points above.

\begin{prop}\label{Pr1}
  The integral points on the conic $\cC_{m^2}$ are exactly the 
  points in the sequences
\begin{align}
  \label{Seq1}
  \ldots,\ (-m,-m^3), \ (-m,0), \ & (m,0), \ (m,m^3), \ (m^5-m,m^3), \ \ldots \\
  \label{Seq2}
  \ldots,\ (-m^3,-m), \ (0,-m), \ & (0,m), \ (m^3, m), (m^3,m^5-m), \ \ldots.
\end{align}
These points have the form $(a_n,a_{n+1})$ and $(a_{n+2},a_{n+1})$,
where the sequence $(a_n)$ satisfies the recurrence relation
$$ a_{n+2} = m^2a_{n+1} - a_n $$
and has the property $a_na_{n+2} = a_{n+1}^2 - m^2$.
\end{prop}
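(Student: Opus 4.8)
The plan is to prove the two inclusions separately: first that every point listed in (\ref{Seq1}) and (\ref{Seq2}) lies on $\cC_{m^2}$, and then, by a Vieta jumping descent, that there are no other integral points.

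For the forward direction I would work with the quadratic form $Q(x,y) = x^2 - m^2 xy + y^2$ and set $Q_n = Q(a_n, a_{n+1})$. Substituting the recurrence $a_{n+2} = m^2 a_{n+1} - a_n$ into $Q_{n+1}$ and expanding, all terms carrying a factor $m^4$ cancel and one is left with $Q_{n+1} = Q_n$; with the normalization $a_0 = 0$, $a_1 = m$ one has $Q_0 = Q(0,m) = m^2$, so $Q_n = m^2$ for every $n$ and each $(a_n, a_{n+1})$ lies on $\cC_{m^2}$. Since $Q$ is symmetric, $Q(a_{n+2}, a_{n+1}) = Q(a_{n+1}, a_{n+2}) = Q_{n+1} = m^2$, so the points $(a_{n+2}, a_{n+1})$ lie on the conic too, and these are exactly the remaining entries of (\ref{Seq1}) and (\ref{Seq2}). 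The identity $a_n a_{n+2} = a_{n+1}^2 - m^2$ then drops out at once: the recurrence gives $a_n a_{n+2} = m^2 a_n a_{n+1} - a_n^2$, while $Q_n = m^2$ rearranges to $m^2 a_n a_{n+1} - a_n^2 = a_{n+1}^2 - m^2$.

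The substance of the proof is the converse. Fixing $b$, equation (\ref{EC1}) is a monic quadratic in $a$ whose two roots $a, a'$ satisfy $a + a' = m^2 b$ and $a a' = b^2 - m^2$; hence $(a', b) = (m^2 b - a, b)$ is again an integral point, and symmetrically in the other coordinate. The key preliminary is a positivity lemma: there is no integral point with $a$ and $b$ of strictly opposite signs, since then $-m^2 ab > 0$ forces $a^2 + b^2 + m^2|ab| = m^2$, which is impossible once $|a|, |b| \ge 1$. Consequently every integral point has $a$ and $b$ of the same sign or a vanishing coordinate, and by the symmetry $(a,b) \mapsto (-a,-b)$ it suffices to treat points with $a, b \ge 0$.

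The descent then takes an integral point with $0 \le a \le b$ and, fixing the smaller coordinate $a$, jumps the larger one to $b' = m^2 a - b = (a^2 - m^2)/b$. The positivity lemma forces $b' \ge 0$ (a negative value would pair with $a > 0$ to give an excluded opposite-sign point), while $a \le b$ gives $b' = (a^2 - m^2)/b < b$; thus the sum $a + b$ strictly decreases at each step yet remains a non-negative integer, so the process must terminate, and it can only terminate when a coordinate reaches $0$, that is at $(m,0)$ or $(0,m)$. As these base points are the central entries of (\ref{Seq1}) and (\ref{Seq2}), and the forward direction shows those sequences are precisely the Vieta-jumping orbits of the base points, every non-negative integral point, and hence after the sign and coordinate-swap symmetries every integral point, belongs to (\ref{Seq1}) or (\ref{Seq2}). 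I expect the main obstacle to be exactly the positivity lemma and its role in keeping the jumps non-negative: without it the descent could escape the region where $a + b$ is bounded below, and termination would fail.
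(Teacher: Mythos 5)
Your proof is correct and follows essentially the same route as the paper: Vieta jumping, descent on the coordinate sum $a+b$, and non-negativity of the jumped coordinate via the same contradiction (an opposite-sign point would force $a^2 + m^2|ab| + b^2 = m^2$, impossible for $|a|,|b|\ge 1$), which the paper embeds in its Lemma~\ref{L2} rather than isolating as a separate positivity lemma. The only cosmetic differences are that you verify the forward inclusion by an explicit induction on $Q(a_n,a_{n+1})$ and that, by always jumping the larger coordinate with $a\le b$, you avoid the paper's separate lemma excluding $a=b$.
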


\begin{figure}[ht!]
\begin{tikzpicture}[scale=0.6]
  \clip (-2.8,-3.2) rectangle (8.5,8.5);  
  \draw[->] (-3.2,0) -- (8.5,0);
  \draw[->] (0,-3.2) -- (0,8.5);  
  \foreach \x in {1,2,3,4,5,6,7,8}
  \draw (\x cm,1pt) -- (\x cm,-1pt) node[anchor=north]
        {{\small $\x$}};  
  \foreach \y in {1,2,3,4,5,6,7,8}
  \draw (1pt,\y cm) -- (-1pt,\y cm) node[anchor=east]
        {{\small $\y$}};
  \draw (-2,0) node[anchor=south] {{\small $-2$}};
  \draw (0,-2) node[anchor= west] {{\small $-2$}};
  \draw[line width=0.7pt] plot[variable=\t, domain=0.28:2,samples=101]
  ({(2*\t^2-2)/(\t^2-4*\t+1)},{(8*\t^2-4*\t)/(\t^2-4*\t+1)});
  \draw[line width=0.7pt] plot[variable=\t, domain=0.28:2.2,samples=101]
  ({-(2*\t^2-2)/(\t^2-4*\t+1)},{-(8*\t^2-4*\t)/(\t^2-4*\t+1)});
  \fill ( 2,0) circle(2.5pt);
  \fill (-2,0) circle(2.5pt);
  \fill (0,-2) circle(2.5pt);
  \fill (0, 2) circle(2.5pt);
  \fill (8, 2) circle(2.5pt);
  \fill (2, 8) circle(2.5pt);
  \draw [line width=0.8pt, dotted] (-2,0) -- (2,0);
  \draw [line width=0.8pt, dotted] (0,-2) -- (0,2);
  \draw [line width=0.8pt, dotted] (2,0) -- (2,8);
  \draw [line width=0.8pt, dotted] (0,2) -- (8,2);
  \draw [line width=0.8pt, dotted] (-3.2,-2) -- (0,-2);
  \draw [line width=0.8pt, dotted] (-2,-3.2) -- (-2,0);
  \draw [line width=0.8pt, dotted] (2,8) -- (8.4,8);
  \draw [line width=0.8pt, dotted] (8,2) -- (8,8.4);
\end{tikzpicture}
 \caption{Vieta jumping on $\cC_4: x^2 - 4xy + y^2 = 4$.}\label{AbbVieta} 
\end{figure}
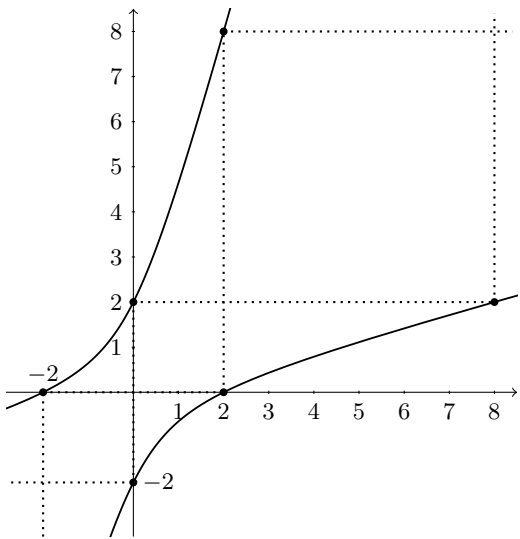

If we begin with the point $(m,0)$ and set $a = m$ in (\ref{EC1}), then
we obtain the quadratic equation $m^2 - m^3b + b^2 = m^2$, i.e.,
$0 = b^2 - m^3b = b(b-m^3)$ with the known solution $b_1 = 0$ and the
second solution $b_2 = m^3$, giving rise to the point $(m, m^3)$.
If we now set $b = m^3$, we obtain a quadratic equation in $a$ with
one integral solution $a_1 = m$; the second solution is necessarily
integral:

\begin{lemma}\label{LemInt}
  If the equation $a^2 - m^2ab + b^2 = m^2$ has a solution $a_1  \in \Z$
  when considered as a quadratic equation in $a$, then the second solution
  $a_2$ is also integral.
\end{lemma}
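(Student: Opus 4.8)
The plan is to read the defining relation of $\cC_{m^2}$ as a quadratic polynomial in the single unknown $a$, treating both $b$ and $m$ as fixed integers, and then to extract integrality directly from Vieta's formulas. The whole point of ``Vieta jumping'' is precisely that one of the elementary symmetric functions of the two roots is visibly an integer, so that integrality of one root is automatically inherited by the other.

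First I would rewrite $a^2 - m^2 ab + b^2 = m^2$ in standard monic form as
$$a^2 - (m^2 b)\,a + (b^2 - m^2) = 0.$$
Since $b, m \in \Z$, every coefficient of this quadratic lies in $\Z$. Let $a_1, a_2$ denote its two roots, with $a_1 \in \Z$ by hypothesis. Next I would invoke Vieta's formula for the sum of the roots, namely $a_1 + a_2 = m^2 b$, read off directly as the negative of the coefficient of $a$. The conclusion is then immediate: because $m$, $b$, and $a_1$ are all integers, we obtain
$$a_2 = m^2 b - a_1 \in \Z,$$
as claimed. One could instead use the product relation $a_1 a_2 = b^2 - m^2$, but that only forces $a_2 \in \Q$ unless one already knows that $a_1 \mid b^2 - m^2$; the sum relation is cleaner precisely because $m^2 b$ is manifestly integral regardless of the value of $a_1$.

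There is essentially no obstacle here — this is the single algebraic fact that powers the entire jumping construction used to generate the sequences (\ref{Seq1}) and (\ref{Seq2}). The only point worth checking is that the equation is genuinely quadratic in $a$, i.e.\ that the leading coefficient $1$ is nonzero, so that two roots really exist and Vieta applies; in the degenerate double-root case one has $a_2 = a_1 \in \Z$ trivially. With the lemma in hand, the reversal of the jumping procedure described above shows that each integral point on $\cC_{m^2}$ is obtained from the obvious points $(\pm m,0)$ and $(0,\pm m)$ by repeatedly replacing one coordinate with its Vieta partner, which yields exactly the recurrence $a_{n+2} = m^2 a_{n+1} - a_n$ of Proposition~\ref{Pr1}.
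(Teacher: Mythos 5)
Your proof is correct and follows essentially the same route as the paper: both factor the monic quadratic in $a$ and use the Vieta sum relation $a_1 + a_2 = m^2 b$ to conclude $a_2 = m^2 b - a_1 \in \Z$ (the paper's printed version writes $a_1 + a_2 = m^2$, evidently a typo for $m^2 b$, which your version states correctly). Your added remarks on why the sum relation is preferable to the product relation, and on the degenerate double-root case, are sensible but not needed.
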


\begin{proof}
  Write $a^2 - m^2ab + b^2 - m^2 = (a-a_1)(a-a_2)$; then
  $a_1 + a_2 = m^2$ implies that $a_2$ is an integer whenever $a_1$ is.  
\end{proof}

Continuing in this way we find an infinite sequence of integral
points on the conic $\cC_m$. Assume that we have an integral point 
$(a,b)$ on $\cC_{m^2}$. For computing a second point with the same
$y$-coordinate $b$ we have to solve
$$ x^2 - m^2xb + b^2 = m^2 $$
knowing that
$$ a^2 - m^2ab + b^2 = m^2. $$
Subtracting these equations from each other we find
$$ 0 = x^2 - a^2 - m^2b(x-a) = (x-a)(x+a-m^2b), $$
hence the solutions of the quadratic equation are the known solution
$x_1 = a$ as well as $x_2 = m^2b - a$. Thus Vieta jumping produces the
new solution
$$ \flat(a,b) = (m^2b-a,b). $$

Similarly, if we keep the $x$-coordinate fixed we have to solve
$a^2 - m^2ay + y^2 = m^2$, and this produces the new point
$$ \sharp(a,b) = (a,m^2a-b). $$

Thus we obtain the following operations (see Fig.~\ref{descent}) for
finding new integral points on $\cC_k$; points $(a,b)$ with $a < b$
are on the upper branch of the curve, those with $a > b$ lie on the
lower branch. Starting with the obvious points $(\pm m, 0)$ and
$(0, \pm m)$ we now can construct the two sequences of integral points
(\ref{Seq1}) and (\ref{Seq2}) on $\cC_{m^2}$.

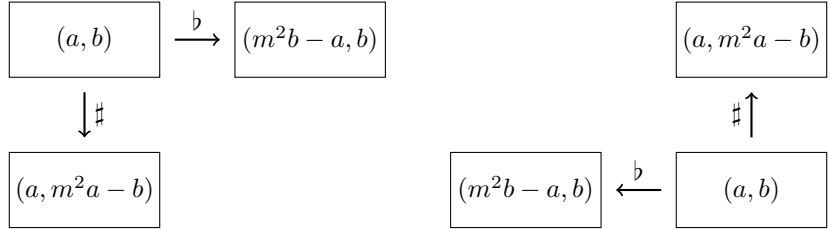
\begin{figure}[ht!]
\begin{tikzpicture}
  \draw (0,0) rectangle (2,1);
  \draw (3,2) rectangle (5,3);
  \draw (0,2) rectangle (2,3);
  \draw[->,line width=0.8pt] (1,1.8) -- (1,1.2) node[midway,right] {$\sharp$};
  \draw[->,line width=0.8pt] (2.2,2.5) -- (2.8,2.5)
        node[midway, above] {$\flat$};
  \draw (1,0.5) node {$(a,m^2a - b)$};
  \draw (1,2.5) node {$(a,b)$};
  \draw (4,2.5) node {$(m^2b - a,b)$};
\end{tikzpicture} \qquad \begin{tikzpicture}
  \draw (0,0) rectangle (2,1);
  \draw (-3,0) rectangle (-1,1);
  \draw (0,2) rectangle (2,3);
  \draw[->,line width=0.8pt] (1,1.2) -- (1,1.8)
         node[midway,left] {$\sharp$};
  \draw[->,line width=0.8pt] (-0.2,0.5) -- (-0.8,0.5)
         node[midway, above] {$\flat$};
  \draw (1,0.5) node {$(a,b)$};
  \draw ( 1,2.5) node {$(a,m^2a - b)$};
  \draw (-2,0.5) node {$(m^2b - a,b)$};
\end{tikzpicture} 
\caption{Vieta jumping on $\cC_m: a^2 - m^2 ab + b^2 = m^2$ for points
on the lower (left) and upper branch (right).}\label{descent}
\end{figure}

Next we will show that each integral point on $\cC_{m^2}$ is among those in 
(\ref{Seq1}) or (\ref{Seq2}). The basic idea is using descent: Start
with an arbitrary integral point $(A, B)$ and use Vieta jumping to find
an integral point close to the origin; the only such points with coordinates
whose absolute values are $\le m$ are the points $(\pm m, 0)$ and $(0,\pm m)$,
and reversing the procedure we see that $(A,B)$ belongs to one of the two
sequences.

The tool for proving this claim is the following lemma:

\begin{lemma}\label{L2}
  If $(a,b)$ is an integral point with positive coordinates $a \ne b$
  on $\cC_k$, then there is another integral point $(a', b')$ on $\cC_k$
  with $a', b' \ge 0$ and $a' + b' < a + b$.
\end{lemma}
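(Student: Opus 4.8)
The plan is to exploit the $a \leftrightarrow b$ symmetry of the defining equation together with Vieta jumping in a single coordinate. By that symmetry we may assume $a > b > 0$. Keeping $b$ fixed and viewing $a^2 - kab + b^2 - k$ as a quadratic in the first variable, its two roots are $a$ and $a' = kb - a$; writing $x^2 - kbx + (b^2 - k) = (x - a)(x - a')$ exactly as in Lemma~\ref{LemInt} shows that $a'$ is again an integer, and comparing constant terms gives $aa' = b^2 - k$. Since $a'$ is by construction a root, the point $(a',b)$ again lies on $\cC_k$, and because its $b$-coordinate is unchanged it suffices to prove $0 \le a' < a$, for then $(a',b)$ has nonnegative coordinates and $a' + b < a + b$.

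The bound $a' < a$ is the easy half: since $b < a$ and $k > 0$ we have $b^2 - k < b^2 < a^2$, so if $a' \ge 0$ then $a' = (b^2 - k)/a < a$, while if $a'$ were negative the inequality would be trivial. Thus the entire difficulty is concentrated in showing that the jump does not overshoot into the negative region, i.e.\ that $a' \ge 0$; this is the step I expect to be the main obstacle.

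I would handle it by first establishing the auxiliary bound $k \le a$. Rewriting $a^2 - kab + b^2 = k$ as $k = (a^2+b^2)/(ab+1)$, the inequality $k \le a$ is equivalent to $b^2 \le a^2(b-1) + a$, which follows immediately from $a > b \ge 1$: for $b = 1$ the right-hand side is $a \ge 2 > 1 = b^2$, and for $b \ge 2$ already $a^2(b-1) \ge a^2 > b^2$. Granting $k \le a$, I obtain $a' = (b^2 - k)/a \ge (b^2 - a)/a = b^2/a - 1 > -1$, and since $a'$ is an integer this forces $a' \ge 0$. Combining the two halves yields an integral point $(a',b)$ on $\cC_k$ with $a',b \ge 0$ and $a'+b < a+b$, as claimed. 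The only thing to watch is the borderline behaviour where $a' = 0$ or $b^2 = k$, but these are already absorbed by the non-strict inequalities above.
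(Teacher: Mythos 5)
Your argument is correct and follows the same overall strategy as the paper: the same Vieta jump $a' = kb - a$ (after reducing to $a > b > 0$ by symmetry), the same integrality observation via the factorization $x^2 - kbx + (b^2-k) = (x-a)(x-a')$, and the same proof that $a' < a$ from $b^2 - k < b^2 < a^2$. The one place where you genuinely depart from the paper is the step you correctly single out as the crux, namely $a' \ge 0$. The paper proves it by contradiction using only the fact that the jumped point lies on $\cC_k$: if $a' \le -1$, then $a'b \le -1$, so $0 = a'^2 - k a' b + b^2 - k \ge a'^2 + k + b^2 - k = a'^2 + b^2 > 0$, which is impossible. You instead first establish the auxiliary bound $k \le a$ (equivalent to $b^2 \le a^2(b-1) + a$ for integers $a > b \ge 1$) and then squeeze $a' = (b^2-k)/a > -1$, invoking integrality of $a'$ to conclude $a' \ge 0$. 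Both arguments are sound. The paper's contradiction is slightly more robust in that it uses only $k > 0$ and $b \ge 1$ and no relation between $k$ and the coordinates, so it transfers unchanged to the variants treated later where the constant on the right-hand side is not $k$ itself; your route has the compensating merit of producing the explicit inequality $k \le a$ (in fact $k < a$), which quantifies where the jump lands and already foreshadows why the descent must terminate near the coordinate axes.
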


\begin{proof}
  We set
  $$ (a',b') = \begin{cases}
    (a, ka-b) & \text{ if } b > a, \\
    (kb-a, b) & \text{ if } b < a. \\
  \end{cases} $$
  From $a^2 - kab + b^2 = k$ we deduce that
  $ka - b = \frac{a^2-k}b$ and $kb-a = \frac{b^2-k}a$. 
  \begin{itemize}
  \item  $a' + b' < a + b$: In the first case,
    $b' = ka - b = \frac{a^2-k}b < b$ since $a^2 - k < a^2 < b^2$;
    in the second case, $a' = kb-a  = \frac{b^2-k}a < a$ since
    $b^2 - k < b^2 < a^2$. Thus in both cases $a' + b' < a + b$.
  \item $a' \ge 0$, $b' \ge 0$: In fact, if
    $b' = ka-b < 0$ in the first case, then $ab' \le -1$, hence
    $$ 0 = {b'}^2 - kab' + a^2 - k \ge {b'}^2 + k + a^2 - k
         = a^2 + {b'^2} > 0, $$
    which is a contradiction; thus $b' \ge 0$.
  \end{itemize}  
\end{proof}

Next we show

\begin{lemma}
  For all integers $k \ge 1$, the conic $\cC_k$ does not contain any
  integral points $(a,b)$ with $a = b$ except when $k = 1$.
\end{lemma}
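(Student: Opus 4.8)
The plan is to substitute the constraint $a = b$ directly into the defining equation and read off the very limited possibilities. Setting $b = a$ in $a^2 - kab + b^2 = k$ collapses the left-hand side to $2a^2 - ka^2$, so any integral point on $\cC_k$ with $a = b$ must satisfy the single Diophantine condition
$$ (2-k)a^2 = k. $$
Everything then follows from inspecting the sign and size of the two sides, so no genuine number-theoretic machinery is needed.

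First I would dispose of the degenerate case $a = 0$: it forces $k = 0$, contradicting $k \ge 1$, so we may assume $a \ne 0$ and hence $a^2 \ge 1$. The right-hand side $k$ is then strictly positive, so the left-hand side $(2-k)a^2$ must be positive as well; since $a^2 > 0$ this requires $2 - k > 0$, i.e.\ $k = 1$, the only positive integer below $2$. In particular no integral point with $a = b$ can exist once $k \ge 2$, which is exactly the exclusion in the statement.

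It remains to confirm that $k = 1$ genuinely produces solutions rather than merely failing to produce a contradiction. Substituting $k = 1$ into $(2-k)a^2 = k$ gives $a^2 = 1$, so $a = b = \pm 1$, and one checks directly that $(\pm 1, \pm 1)$ lie on $\cC_1$. There is no real obstacle in this argument—it is a one-line substitution followed by a sign comparison—and the only point demanding a little care is the boundary value $k = 1$, where I deliberately exhibit the two points explicitly so as not to confuse ``the equation is consistent'' with ``the equation has an integral solution.''
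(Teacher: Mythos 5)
Your proof is correct and takes essentially the same approach as the paper: substitute $a=b$ and analyze the resulting one-variable condition. The only cosmetic difference is in the finish --- the paper writes $k = \frac{2a^2}{a^2+1}$ and uses $\gcd(2a^2,a^2+1)\in\{1,2\}$ to force $a\in\{0,\pm 1\}$, whereas you rearrange to $(2-k)a^2=k$ and compare signs to force $k=1$ directly; both are valid and yours is, if anything, slightly more streamlined.
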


\begin{proof}
  If $a = b$, then the quotient
  $k = \frac{a^2 + b^2}{ab+1} = \frac{2a^2}{a^2+1}$ must be an integer;
  since $\gcd(2a^2, a^2+1) = \gcd(2,a^2+1)$ is either $1$ or $2$,
  this is only possible if $a = 0$ or $a = 1$. But $(0,0)$ is a point
  on $\cC_k$ if and only if $k = 0$, which we have excluded, and
  $(1,1)$ is a point on $\cC_k$ if and only if $k = 1$.
\end{proof}

Now we can finish the proof of Prop.~\ref{Pr1}:
Given an integral point $(a,b)$ in the first quadrant with
$a > 0$ and $b > 0$  we can find another integral point $(a',b')$ with
$a' \ge 0$, $b' \ge 0$ and $a' + b' < a + b$. We can repeat this step
until we end up with an integral point $(r,s)$ with $r = 0$ or $s = 0$.
But the only such points are $(\pm m, 0)$ and $(0, \pm m)$.

\subsection*{First Solution of the IMO problem}

Assume that $a^2 + b^2 = k(ab+1)$, which means that
$P(a, b)$ is an integral point on the conic $\cC_k: x^2 - kxy + y^2 = k$;
also assume that $k$ is not a square. Then $k \ge 2$ (and as a matter of
fact $k \ge 3$, since $k = 2$ implies $2 = (x-y)^2$, which is impossible
in integers).

We know from Lemma~\ref{L2} that as long as $a > 0$ and $b > 0$ we can find an
integral point $(a', b')$ on $\cC_k$ lying in the first quadrant with
$a' + b' < a + b$. Applying this step sufficiently often we obtain an
integral point of the form $(A,0)$ or $(0,A)$; but then $k = A^2$ is a
square.

\section{A family of equations}

Our next result strengthens an observation given without proof by
Milchev \cite{Milchev}:

\begin{theorem}\label{TM1}
  If the diophantine equation
  \begin{equation}\label{M1}
    x^2 - pxy + y^2 = q    
  \end{equation}
  has integral solutions for integers $p > 2$ and $0 < q \le p+1$, then
  $q$ is a square.
\end{theorem}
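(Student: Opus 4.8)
The plan is to adapt the Vieta jumping descent from Section 1 to the more general equation $x^2 - pxy + y^2 = q$.The plan is to adapt the Vieta jumping descent of Lemma~\ref{L2} and the first solution of the IMO problem to the more general equation (\ref{M1}). Fixing the second coordinate of a solution $(a,b)$ turns (\ref{M1}) into the quadratic $x^2 - pbx + (b^2 - q) = 0$, whose two roots sum to $pb$; hence if $(a,b)$ is a solution then so is $(pb - a, b)$, and symmetrically $(a, pa - b)$. These are the two jumps $\flat$ and $\sharp$, and exactly as in Section~1 one checks the identities $pa - b = \frac{a^2 - q}{b}$ and $pb - a = \frac{b^2 - q}{a}$.

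First I would reduce to solutions lying in the closed first quadrant. The key point is that no solution can have coordinates of strictly opposite sign: if $a > 0$ and $b < 0$, then $|a|, |b| \ge 1$ and $-ab \ge 1$, so $q = a^2 + b^2 - pab \ge 1 + 1 + p = p + 2$, contradicting $q \le p+1$. Since the substitution $(a,b) \mapsto (-a,-b)$ preserves (\ref{M1}), we may therefore assume $a, b \ge 0$. This is precisely where the hypothesis $q \le p+1$ enters.

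Next I would run the descent. A diagonal solution $a = b > 0$ is impossible, because it forces $(2 - p)a^2 = q$, which is negative for $p > 2$; so any solution in the open first quadrant has $a \ne b$. Assuming $0 < a < b$, the jump $(a,b) \mapsto (a, pa - b)$ strictly decreases the coordinate sum, since $pa - b = \frac{a^2 - q}{b} < b$ follows from $a^2 - q < a^2 < b^2$; moreover the new point again lies in the first quadrant, for otherwise it would be a solution with strictly opposite-sign coordinates, which we have just excluded. Iterating (using the symmetric jump $(a,b) \mapsto (pb-a,b)$ when $a > b$) produces a strictly decreasing sequence of non-negative integer sums, which must terminate at a point with a zero coordinate, say $(A, 0)$. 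Substituting into (\ref{M1}) gives $q = A^2$, and $A \ne 0$ since $q > 0$, so $q$ is a perfect square.

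The routine parts are the two displayed identities and the sum-decrease inequality, both inherited almost verbatim from Section~1. The one thing to get right---and the conceptual heart of the statement---is that the single hypothesis $0 < q \le p+1$ is doing double duty: it rules out opposite-sign solutions both at the start (to enter the first quadrant) and at every step of the descent (to keep the jumps non-negative). I expect the main obstacle to be presenting this sharp-threshold role cleanly rather than any computational difficulty; once the sign argument is in place, the descent terminates on a coordinate axis exactly as in the IMO solution.
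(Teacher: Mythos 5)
Your proof is correct and is essentially the paper's own argument: the same Vieta jumps $\flat$ and $\sharp$, the same descent on the coordinate sum terminating on an axis, and the same key inequality $a^2 - pab + b^2 \ge 1 + p + 1 = p+2$ when $ab < 0$, which is exactly where the paper also uses the hypothesis $q \le p+1$. The only (cosmetic) difference is that you rule out opposite-sign solutions once at the outset and keep the whole descent in the closed first quadrant, whereas the paper lets the reduction step overshoot the axis once and then invokes that same inequality to reach the contradiction.
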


The result is best possible in the sense that if $q = p+2$, then there
are infinitely many integral solutions provided by Vieta jumping
starting with $(1,-1)$. Given an integral point $(a,b)$ on the conic
$\cM: x^2 - pxy + y^2 = p+2$, Vieta jumping provides us with
$\sharp(a,b) = (a,ap-b)$ and $\flat(a,b) = (bp-a,b)$, leading to the
chains of integral points
$$ \ldots (-p-1,-1) \lrf (1, -1) \lrs (1, p+1) \lrf (p^2+p-1,p+1) \lrs \ldots $$
and
$$ \ldots (-1,-p-1) \lrs (-1, 1) \lrf (p+1, 1) \lrs (p+1,p^2+p-1) \lrf \ldots.$$
Using the recurrence sequence $(a_n)$ defined by $a_1 = 1$,  $a_2 = p+1$
and $a_{n+1} = pa_n - a_{n-1}$, these points have the form $(a_n,a_{n+1})$ 
or  $(a_{n+1},a_n)$.

\begin{proof}[Proof fo Thm.~\ref{TM1}]
Assume that $(a,b)$ is an integral point with $\sqrt{q} < a < b$. Then
$\sharp(a,b) = (a,b')$ is another integral point, where
$b' = ap-b = \frac{a^2-q}{b}$. Clearly
$b' = \frac{a^2-q}{b} < \frac{a^2}q < \frac{ab}{b} = a$, and
$b' > 0$ since $a^2 > q$.

Thus as long as $\sqrt{q} < a < b$ we can find a new integral points
$(a,b')$ with $0 < b' < a$. If $b' > \sqrt{q}$, apply this construction
to $(b',a)$ and continue. This shows that if there is an integral point,
then there must be such a point $(a,b)$ with $0 < a \le \sqrt{q}$ and $b > 0$.
Applying the reduction step once more we find an integral point $(a,b)$
with $0 < a \le \sqrt{q}$ and $-\sqrt{q} \le b \le 0$. If $a = \sqrt{q}$,
$b = 0$ or $b = \sqrt{q}$, then $q$ is a square and we are done. If
the bounds are sharp, then $|a| \ge 1$ and $|b| \ge 1$, and we find
$ a^2 - pab + b^2 = a^2 + p|ab| + b^2 \ge 1 + p + 1 = p+2$. This
completes the proof.  
\end{proof}

If $p = 2$, then $q = x^2 - 2xy + y^2 = (x-y)^2$ has solutions if and
only if $q$ is a square. If $p = 1$, then $x^2 - xy + y^2 = q$
is equivalent to $(2x-y)^2 + 3y^2 = 4q$, and it is well known that
this equation is solvable if the prime factorization of $q$ contains
prime factors $\equiv 2 \bmod 3$ with an even exponent. The number of
closed paths depende on the number of ways $q$ can be written in the
form $q = c^2 + 3d^2$. The case $p = 0$ is similar: $x^2 + y^2 = q$
means $q$ is a sum of two squares, and this is the case if and only
if primes $p \equiv 3 \bmod 4$ occur an even number of times in the
prime factorization of $q$. If $q = A^2 + B^2$, then
$$ (A,B) \lrf (-A,B) \lrs (-A,-B) \lrf (A,-B) \lrs (A,B) $$
is a closed path with four points (only two if $A=0$ or $B = 0$), 
and the number of paths depends on the number of ways $q$ can be written
as a sum of two squares.

If $p < 0$, then clearly $q > 0$, and replacing $y$ by $-y$ transforms
$x^2 - pxy + y^2 = q$ into $x^2 - p'xy + y^2 = q$ with $p' > 0$, which
is covered by our results above.

The only case missing is $p \ge 3$ and $q < 0$.

\begin{theorem}\label{Thpq}
  The diophantine equation
  \begin{equation}\label{M2}
    x^2 - pxy + y^2 = q    
  \end{equation}
  does not have any integral solutions for integers $p > 0$ and
  $3-p \le q < 0$.
  
  If $q = 2-p$, then the integral solutions of (\ref{M2}) are given by
  $$ \ldots \lrf (1,1) \lrs (1,p-1) \lrf (p^2-p-1,p-1) \lrs
     (p^2-p-1,p^3 - p^2 - 2p + 1) \lrf \ldots $$
  or by the corresponding points in the third quadrant.
\end{theorem}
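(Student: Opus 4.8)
The plan is to treat both assertions by the same descent on $a+b$ that drove the proof of Proposition~\ref{Pr1}, exploiting the sign information that $q<0$ forces. First I would record the structural consequence of $q<0$: writing the equation as $x^2+y^2 = pxy + q$, any integral solution satisfies $pxy = x^2 + y^2 + |q| > 0$, so $xy>0$ and the two coordinates have the same sign. By the symmetry $(x,y)\mapsto(-x,-y)$ it therefore suffices to understand solutions with $x,y>0$; the remaining solutions are their negatives. I would also note at the outset that the interval $3-p\le q<0$ is empty unless $p\ge 4$ (since $q\le -1$ forces $p\ge 4$), so throughout the first part one may assume $p\ge 4$ and $1\le |q|\le p-3$.

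For the nonexistence statement I would argue by contradiction: assume a positive solution exists and pick one, $(a,b)$, minimising $a+b$; by symmetry take $a\le b$. The key point is that Vieta jumping in the $y$-coordinate replaces $b$ by the second root $b'=pa-b$ of $Y^2-paY+(a^2-q)=0$, and since the product of the roots is $bb' = a^2+|q|>0$ and their sum $pa>0$, the root $b'$ is again a positive integer. Minimality of $a+b$ then forces $b'\ge b$, whence $b^2\le bb' = a^2+|q|$. Feeding this into $pab = a^2+b^2+|q|$ together with $b\ge a$ yields $(p-2)a^2 \le 2|q| \le 2(p-3)$; since $\frac{2(p-3)}{p-2}<2$ this gives $a^2<2$, i.e. $a=1$. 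Repeating the same minimality bound with $a=1$ (now $b^2\le 1+|q|$ while $b^2 = pb-1-|q|$) gives $pb\le 2+2|q|\le 2p-4$, so $b=1$ as well. But $(1,1)$ lies on the conic only for $q=2-p$, which violates $q\ge 3-p$; this contradiction proves there are no integral points.

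For the case $q=2-p$ I would run exactly the same descent, now with $|q|=p-2$. The inequality $(p-2)a^2\le 2|q|=2(p-2)$ gives $a^2\le 2$, hence $a=1$, and substituting $a=1$ the equation becomes $b^2-pb+(p-1)=0$, i.e. $(b-1)(b-(p-1))=0$. Thus the minimal positive solution is $(1,1)$. Reversing the jumps $\sharp(a,b)=(a,pa-b)$ and $\flat(a,b)=(pb-a,b)$, every positive solution is obtained from $(1,1)$ and therefore lies on the displayed bi-infinite chain through $(1,1)$, $(1,p-1)$, $(p^2-p-1,p-1)$, $\dots$; the solutions with negative coordinates form the mirror chain in the third quadrant. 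One should also check that the recurrence $a_{n+1}=pa_n-a_{n-1}$ with $a_1=1$, $a_2=p-1$ reproduces precisely these coordinates.

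The main obstacle I anticipate is the bookkeeping in the descent rather than any deep idea: one must verify that the reduced point stays in the open first quadrant (this is exactly where $q<0$, forcing the product of the two roots to be positive, is indispensable, in contrast to the $q>0$ analysis), and one must extract the clean numerical bound $(p-2)a^2\le 2(p-3)$ and check $\frac{2(p-3)}{p-2}<2$ carefully, since the whole argument collapses to the single point $(1,1)$ and the conclusion hinges on whether $q=2-p$ falls inside or just outside the forbidden range $3-p\le q<0$.
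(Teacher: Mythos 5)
Your proof is correct, and while it is still a Vieta-jumping descent it reaches the reduced point by a genuinely different route than the paper. The paper argues geometrically: it computes the least positive real coordinate $\mu=\sqrt{-4q/(p^2-4)}$ attained on the conic, asserts that jumping drives every integral point into the region $0<x,y\le\mu$, and checks that $\mu<1$ once $q>2-p$. You instead extract the bound arithmetically from an extremal solution: the positivity of the product of the two roots, $bb'=a^2+|q|$ (available precisely because $q<0$), together with minimality of $a+b$ gives $b^2\le a^2+|q|$ and hence $(p-2)a^2\le 2|q|$, which forces $a=b=1$ and the contradiction $q=2-p$. Your version costs a little more bookkeeping but buys precision at the boundary: the inequality $(p-2)a^2\le 2|q|$ is exactly what separates $|q|\le p-3$ (no solutions) from $|q|=p-2$ (everything reduces to $(1,1)$), whereas the paper's box $0<x,y\le\mu$ cannot literally contain the reduced point when $q=2-p$ (there $(1,1)$ is a solution although $\mu<1$), so the paper's reduction claim needs the same kind of care you supply. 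You also prove the second assertion (the chain through $(1,1)$), which the paper's proof does not address. The one step you should spell out is the passage from ``the global minimum is $(1,1)$'' to ``every positive solution lies on the chain'': run the descent from an arbitrary solution and observe that any point at which neither jump decreases $a+b$ satisfies the same inequality $(p-2)a^2\le 2|q|$, hence equals $(1,1)$ or $(1,p-1)$ up to swapping coordinates, and both of these lie on the displayed chain.
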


\begin{proof}   
  The minimal positive $x$- and $y$-coordinates of the conic are easily
  seen to be $\mu = \sqrt{\frac{-4q}{p^2-4}}$. Thus by Vieta jumping we can
  reduce each integral point on the conic to $(x,y)$ with
  $0 < x, y \le \mu$. If $q > 2-p$, then
  $$ \frac{-4q}{p^2-4} < \frac{4(p-2)}{p^2-4} = \frac4{p+2} \le 1 $$
  whenever $p \ge 2$. This implies that there is no integral point on the
  conic.  
\end{proof}

Theorem~\ref{Thpq} also has interesting corollaries such as the following:

\begin{prop}
  Assume that $x$ and $y$ are positive integers such that
  $3xy-1$ divides $x^2 + y^2$. Then
  $$ \frac{x^2 + y^2}{3xy - 1} = 1, $$
  and the only integral solutions have the form
  $(x,y) = (F_{2n-1}, F_{2n+1})$ and $(x,y) = (F_{2n+1}, F_{2n-1})$,
  where $F_n$ is the $n$-th Fibonacci number.
\end{prop}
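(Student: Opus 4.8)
The plan is to recognize the divisibility condition as a special case of the conic in Theorem~\ref{Thpq} and then read off its integral points. First I would set $k = \frac{x^2+y^2}{3xy-1}$ and observe that, since $x,y \ge 1$ force $3xy - 1 \ge 2 > 0$ and $x^2 + y^2 > 0$, the quotient $k$ is a positive integer. Clearing denominators rewrites the hypothesis as
$$ x^2 - 3k\,xy + y^2 = -k, $$
so $(x,y)$ is an integral point on $x^2 - pxy + y^2 = q$ with the parameters $p = 3k$ and $q = -k$.

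The second step is to pin down $k$. I would assume $k \ge 2$ and check that $(p,q) = (3k, -k)$ lands in the forbidden range of Theorem~\ref{Thpq}: clearly $q = -k < 0$, while $3 - p \le q$ reads $3 - 3k \le -k$, i.e.\ $2k \ge 3$, which holds for every $k \ge 2$. Theorem~\ref{Thpq} then asserts that the conic has no integral points at all, contradicting the existence of $(x,y)$. Hence $k = 1$, which is exactly the claim $\frac{x^2+y^2}{3xy-1} = 1$.

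With $k = 1$ we have $p = 3$ and $q = -1 = 2 - p$, placing us in the second part of Theorem~\ref{Thpq}. Discarding the third-quadrant chain, the positive solutions form the Vieta chain
$$ (1,1) \lrs (1,2) \lrf (5,2) \lrs (5,13) \lrf (34,13) \lrs \cdots $$
together with their reflections in the diagonal $x = y$; each jump keeps one coordinate and replaces the other according to the recurrence $a_{n+1} = 3a_n - a_{n-1}$, so the coordinates run through $1, 1, 2, 5, 13, 34, \ldots$.

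It then remains to match this sequence with the odd-indexed Fibonacci numbers, and I expect this bookkeeping to be the only place where any care is needed. Applying $F_{m+2} = F_{m+1} + F_m$ twice gives $F_{2n+1} = 3F_{2n-1} - F_{2n-3}$, so the values $F_{-1}, F_1, F_3, F_5, \ldots = 1, 1, 2, 5, 13, \ldots$ obey the same recurrence $a_{n+1} = 3a_n - a_{n-1}$ with the same two initial terms; comparing them term by term identifies the positive points as $(F_{2n-1}, F_{2n+1})$ and $(F_{2n+1}, F_{2n-1})$. The boundary point $(1,1)$ corresponds to $n = 0$ under the convention $F_{-1} = 1$, and the quadratic identity $F_{2n-1}^2 - 3F_{2n-1}F_{2n+1} + F_{2n+1}^2 = -1$ needs no separate proof, being automatic from the invariance of the form $x^2 - 3xy + y^2$ along the Vieta chain.
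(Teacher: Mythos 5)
Your proposal is correct and follows the paper's own route exactly: reduce to the conic $x^2-3kxy+y^2=-k$, invoke Theorem~\ref{Thpq} with $(p,q)=(3k,-k)$ to rule out $k\ge 2$, and then read the $k=1$ solutions off the $q=2-p$ chain. You are in fact somewhat more complete than the paper, which neither verifies the parameter range $3-p\le q<0$ explicitly nor carries out the identification of the chain $1,1,2,5,13,\dots$ with the odd-indexed Fibonacci numbers via $F_{2n+1}=3F_{2n-1}-F_{2n-3}$.
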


\begin{figure}
  \begin{center}
      \begin{tikzpicture}
      \draw[->,opacity=0.4] (-0.5,0) -- (6.5,0);
      \draw[->,opacity=0.4] (0,-0.5) -- (0,6.5);
       \foreach \x in {1,2,3,4,5}
  \draw (\x cm,1pt) -- (\x cm,-1pt) node[anchor=north]
        {{\small $\x$}};  
  \foreach \y in {1,2,3,4,5}
  \draw (1pt,\y cm) -- (-1pt,\y cm) node[anchor=east]
        {{\small $\y$}};  
      \draw[dashed] (5.2,5) -- (2,5) -- (2,1) -- (1,1)
                   -- (1,2) -- (5,2) -- (5,5.2);
      \draw[line width=0.7pt] plot[variable=\t, domain=0.8945:5.2,
        samples=101] ({(\t,{(3*\t - sqrt(5*\t*\t-4))/2})});      
      \draw[line width=0.7pt] plot[variable=\t, domain=0.8945:2.1,
        samples=101] ({(\t,{(3*\t + sqrt(5*\t*\t-4))/2})});   
      \end{tikzpicture}
  \end{center}
  \caption{Vieta jumping on $x^2 - 3xy + y^2 = -1$ in the first quadrant.}
\end{figure}
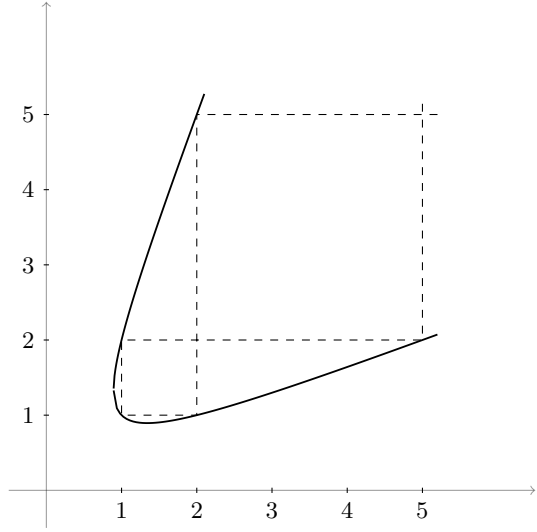

\begin{proof}
  Write $ \frac{x^2 + y^2}{3xy - 1} = k$; then $x^2 - 3kxy + y^2 = -k$.
  If $k \ge 2$, there are no integral solutions by Theorem~\ref{Thpq}.
  If $k = -1$, Vieta jumping yields two sequences of integral points;
  one in the first quadrant, namely
  $$  \ldots \lra (5,2) \lra (2,1)  \lra  (1,1) \lra (1,2)
             \lra (2,5) \lra (5,13) \lra  \ldots, $$
  and the corresponding sequence with negative coordinates in the
  third quadrant.
\end{proof}


For the curve $x^2 - 3xy + y^2 = 1$, Vieta jumping yields two
sequences of integral points:
\begin{align*}
  {} \ldots & \lrs (-3,-8) \lrf (-3,-1) \lrs (0,-1) \lrf (0,1) \lrs (3,1)
           \lrf (3,8) \lrs \ldots \\
  {} \ldots & \lrf (-8,-3) \lrs (-1,-3) \lrs (-1,0) \lrf (1,0) \lrs (1,3)
           \lrs (8,3) \lrf \ldots 
\end{align*}
Also observe that there are many solutions with negative $y$; in this
case, the quotient is the negative of a square. For $k = -4$, for
example, integral solutions of $x^2 + 12xy + y^2 = 4$ are provided by
Vieta jumping from $(2,0)$:
$$ (2,0) \lrs (2,-24) \lrf (286,-24) \lrs \cdots $$

\section{The role of the Pell conic}

In our first solution of the IMO problem (as well as our result concerning
(\ref{M1})) we did not use any algebraic number theory. Our next goal is
explaining a connection of the IMO problem with the arithmetic of certain
real quadratic number fields. We begin with describing the role of the Pell
equation.

It is well know that rational points on conics form a group; if we
fix a rational point $N$, then the sum $P \oplus Q$ of two points
on a conic is the second point of intersection of the parallel to
the line $PQ$ through $N$.

In the special case $k = 4$, the integral points $(a,b)$ on $\cC_4$
correspond directly to units: If $a^2 - 4ab + b^2 = 4$ for integers
$a$ and $b$, then $a$ and $b$ are even, say $a = 2A$ and $b = 2B$;
but then $(a-2b)^2 - 3b^2 = 4$ implies $(A-2B)^2 - 3B^2 = 1$,
hence $(A-2B,B)$ is an integral point on the Pell conic $\cP: x^2 - 3y^2 = 1$.
Conversely, if $(x,y) \in \cP(\Z)$, then $(2x+4y, 2y) \in \cC_4(\Z)$
is an integral point on $\cC_4$. Thus we obtain the two sequences
of integral points on $\cC_4$ given in Table~\ref{TC4}.

\begin{table}[ht!]
$$ \begin{array}{r|c|rr}
  \rsp  k &  \eps^k & a & b \\ \hline
  \rsp -2 &  7 -  4 \sqrt{3} & -2 & -8 \\
  \rsp -1 &  2 -    \sqrt{3} &  0 & -2 \\
  \rsp  0 &     1            &  2 &  0 \\
  \rsp  1 &  2 +    \sqrt{3} &  8 &  2 \\
  \rsp  2 &  7 +  4 \sqrt{3} & 30 &  8 \\
  \rsp  3 & 26 + 15 \sqrt{3} & 82 & 30  
\end{array} \qquad \begin{array}{r|c|rr}
  \rsp  k &  -\eps^k & a & b \\ \hline
  \rsp -2 &  -7 +  4 \sqrt{3} & -30  &   8 \\
  \rsp -1 &  -2 +    \sqrt{3} &  -8  &   2 \\
  \rsp  0 &     -1            &  -2  &   0  \\
  \rsp  1 &  -2 -    \sqrt{3} &   0  &  -2  \\
  \rsp  2 &  -7 -  4 \sqrt{3} &   2  &  -8  \\
  \rsp  3 & -26 - 15 \sqrt{3} &   8  & -30   
  \end{array} $$
  \caption{Integral points on $\cC_4$}\label{TC4}
\end{table}

Since there is no obvious choice of a neutral element $N$ on
$\cC_k: x^2 - kxy + y^2 = k$ it is better to regard $\cC_k$ as
a principal homogeneous space with respect to the associated Pell conic.
We will now explain what we mean by this.

By completing the square we can write $\cC_k$ in the form
$$ (x - \kappa y)^2 - (\kappa^2-1)y^2 = k, $$
where $k = 2\kappa$ (we do not assume that $k$ is even, so $\kappa$
may be a half-integer). This equation tells us that the element
$$ \alpha = x-\kappa y + y\sqrt{m} \in \cO_k $$
with $m = \kappa^2 - 1$ has norm $k$ in the quadratic number field
$k = \Q(\sqrt{m}\,)$, where $m = \kappa^2-1$. The fundamental unit
of $\Q(\sqrt{m}\,)$ is $\eps = \kappa + \sqrt{m}$; in fact we have
$$ \eps = \frac{k + 2 \sqrt{m}}2 = \frac{k + \sqrt{k^2-4}}2 $$
if $k$ is odd; if $k$ is even then $\eps \in \Z[\sqrt{m}\,]$ anyway.
Since $\eps$ has norm $+1$, we find that all elements
$\beta = \pm \eps^f \alpha$ have the same norm $k$ as $\alpha$ and
thus correspond to points on $\cC_k$.

For making this action explicit assume that $(a,b)$ is a point on $\cC_k$.
This point corresponds to the element $\alpha = a - \kappa b + b\sqrt{m}$
with norm $k$; multiplication by $\eps$ gives us the element
$$ \alpha \eps = b + a \kappa + a\sqrt{m}, $$
which corresponds to the point $(ka-b,a)$ on $\cC_k$.

Thus given an integral point $Q$ on the conic $\cC_k$ and an integral
point $P$ on the associated Pell conic $\cP$, there is an integral point
$Q' = Q \oplus P$ on $\cC_k$.

\medskip\noindent {\bf Remark.}
For $k = 4$, the points on the Pell conic $t^2 - 3u^2 = 1$ act on the
points on $\cC_4$:
\begin{align*}
  [(x-2y) + y\sqrt{3} \,] (\cdot 2 + \sqrt{3}\,)
    & = 2(x-2y) + 3y + (2y+x-2y) \sqrt{3} \\
    & = 2x - y + x \sqrt{3} = x' - 2y' + y' \sqrt{3} 
\end{align*}
with
$$ x' = 4x-y \quad \text{and} \quad y' = x. $$
Thus the point $P(x, y)$ on $\cC_r$ is mapped to $Q(4x-y,x)$;
for example, $(2,0)$ gets mapped to $(8,2)$.

\subsection*{Pell equations and Vieta jumping}

Solving Pell equations with negative discriminant is trivial;
these equations have only the trivial solutions corresponding to the
units $\pm 1$, with the exception of $x^2 + y^2 = 1$, whose solutions
correspond to the fourth roots of unity in $\Q(i)$, and the
equation $x^2 - xy + y^2 = 1$: This equation has six solutions
corresponding to the six roots of unity in $\Q(\sqrt{-3}\,)$,
and these six integral points on the ellipse $x^2 - xy + y^2 = 1$
are connected by Vieta jumping (see Fig.~\ref{AEll}).

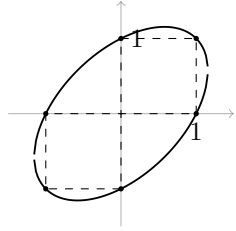
\begin{figure}[ht!]
  \begin{center}
    \begin{tikzpicture}
      \draw[->,opacity=0.3] (-1.5,0) -- (1.5,0);
      \draw[->,opacity=0.3] (0,-1.5) -- (0,1.5);
      \draw ( 1,1pt) -- ( 1,-1pt);
      \draw (-1,1pt) -- (-1,-1pt);
      \draw (-1pt, 1) -- (1pt,1);
      \draw (-1pt,-1) -- (1pt,-1);
      \fill ( 1, 0) circle (1pt);
      \fill ( 1, 1) circle (1pt);
      \fill ( 0, 1) circle (1pt);
      \fill ( 0,-1) circle (1pt);
      \fill (-1, 0) circle (1pt);
      \fill (-1,-1) circle (1pt);
      \draw (1,0) node[anchor=north] {$1$};
      \draw (0,1) node[anchor= west] {$1$};
      \draw [dashed]
           (1,0) -- (1,1) -- (0,1) -- (0,-1) -- (-1,-1) -- (-1,0) -- cycle;
      \draw[line width=0.7pt] plot[variable=\t, domain=-1.154:1.154,
        samples=101] ({(\t,{(\t+sqrt(4-3*\t*\t))/2})});      
      \draw[line width=0.7pt] plot[variable=\t, domain=-1.154:1.154,
        samples=101] ({(\t,{(\t-sqrt(4-3*\t*\t))/2})});      
    \end{tikzpicture}
  \end{center}
  \caption{Integral points on $x^2 - xy + y^2 = 1$}\label{AEll}
\end{figure}


The integral points on the Pell conics $x^2 - 2y^2 = \pm 1$ may also be
obtained using Vieta jumping; all we have to do is write these conics
in the form $x^2 - 2xy - y^2 = \pm 1$.

A parametrization of $x^2 - 2xy - y^2 = 1$ is given by
$$ x = \frac{t^2+1}{t^2+2t-1}, \quad
   y = \frac{2t^2-2t}{t^2+2t-1}. $$

Similarly, the integral points on $(x+y)^2 - 3y^2 = 1$ can be obtained
by Vieta jumping and yield the solutions of the Pell conic $X^2 - 3Y^2 = 1$.   
Integral points on $X^2 - 7Y^2 = 1$ can be found by Vieta jumping after
rewriting the Pell equation in the form $(x+3y)^2 - 7y^2 = 1$.
As a matter of fact, every Pell equation $x^2 - my^2 = 1$ 
with fundamental solution $(x,y) = (t,u)$ can be written in a form
such that the integral solutions can be obtained by Vieta jumping:
Just use a shear mapping to move the point $(t,u)$ to $(1,u)$.

\begin{figure}[ht!]
\begin{tikzpicture}[scale=0.5]
  \clip (-5.5,-3.5) rectangle (5.5,3.5);  
  \draw[->] (-5.5,0) -- (5.5,0);
  \draw[->] (0,-3.5) -- (0,3.5);  
  \foreach \x in {-5,-4,-3,-2,-1,1,2,3,4,5}
  \draw (\x cm,1pt) -- (\x cm,-1pt) node[anchor=north]
        {{\footnotesize $\x$}};
  \foreach \y in {-3,-2,-1,1,2,3}
  \draw (1pt,\y cm) -- (-1pt,\y cm) node[anchor=east]
        {{\footnotesize $\y$}};
  \draw[line width=0.7pt] plot[variable=\t, domain=-2:0.4,samples=101]
  ({(\t*\t+1)/(\t*\t+2*\t-1)},{(-2*\t*\t+2*\t)/(\t*\t+2*\t-1)});
  \draw[line width=0.7pt] plot[variable=\t, domain=-2:0.4,samples=101]
  ({-(\t*\t+1)/(\t*\t+2*\t-1)},{(2*\t*\t-2*\t)/(\t*\t+2*\t-1)});
  \fill ( 1, 0) circle(2.5pt);
  \fill ( 1,-2) circle(2.5pt);
  \fill (-1, 0) circle(2.5pt);
  \fill (-5,-2) circle(2.5pt);
  \fill ( 5, 2) circle(2.5pt);
  \fill (-1, 2) circle(2.5pt);
  \draw [line width=0.8pt, dotted]
  (5,3) -- (5,2) -- (-1,2) -- (-1,0) -- (1,0) -- (1,-2) -- (-5,-2) -- (-5,-3);
\end{tikzpicture} \quad \begin{tikzpicture}[scale=0.5]
  \clip (-5.5,-3.5) rectangle (5.5,3.5);  
  \draw[->] (-5.5,0) -- (5.5,0);
  \draw[->] (0,-3.5) -- (0,3.5);  
  \foreach \x in {-5,...,-1}
  \draw (\x cm,1pt) -- (\x cm,-1pt) node[anchor=north]
        {{\small $\x$}};  
          \foreach \x in {1,...,5}
  \draw (\x cm,1pt) -- (\x cm,-1pt) node[anchor=north]
        {{\small $\x$}};  
  \foreach \y in {-3,-2,-1,1,2,3}
  \draw (1pt,\y cm) -- (-1pt,\y cm) node[anchor=east]
        {{\small $\y$}};
  \draw[line width=0.7pt] plot[variable=\t, domain=-2:0.4,samples=101]
  ({(-2*\t*\t+2*\t)/(\t*\t+2*\t-1)},{(\t*\t+1)/(\t*\t+2*\t-1)});
  \draw[line width=0.7pt] plot[variable=\t, domain=-2:0.4,samples=101]
  ({(2*\t*\t-2*\t)/(\t*\t+2*\t-1)},{-(\t*\t+1)/(\t*\t+2*\t-1)});
  \fill ( 0, 1) circle(2.5pt);
  \fill ( 2,-1) circle(2.5pt);
  \fill ( 0,-1) circle(2.5pt);
  \fill (-2, 1) circle(2.5pt);
  \draw [line width=0.8pt, dotted]
  (3,5) -- (2,5) -- (2,-1) -- (0,-1) -- (0,1) -- (-2,1) -- (-2,-5) -- (-3,-5);
\end{tikzpicture}
\caption{Vieta jumping on the Pell conic $\cP: x^2 - 2xy - y^2 = 1$ and
  on $\cP^{-}: x^2 - 2xy - y^2 = -1$.}\label{AbbPell} 
\end{figure}
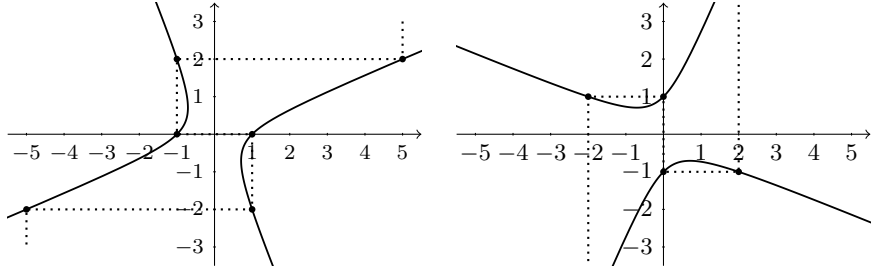

\subsection*{Second Solution of the IMO problem}

We now use the action of the Pell conic on $\cC_k$ to give a second solution
of the IMO problem. The basic idea is to use the action of the Pell
conic to move an integral point to a bounded region close to the origin.
For finding good bounds we use an idea by Davenport published in
\cite{Ankeny}.

\begin{prop}\label{Pr6}
  If the curve $\cC: x^2 - 2kxy + y^2 = \pm m$ for $m \in \N$ has integral
  points, then $m$ is a square or $m \ge 2k+2$.
\end{prop}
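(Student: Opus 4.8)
The plan is to exploit the action of the Pell conic on $\cC$ set up in the previous section, using it (this is Davenport's idea) to push an arbitrary integral point into a small box near the origin, and then to read off the admissible values of $m$ by hand. First I would complete the square, writing $\cC$ in the form $(x-ky)^2-(k^2-1)y^2=\pm m$. A point $(x,y)\in\cC(\Z)$ then corresponds to an element $\alpha=(x-ky)+y\sqrt{k^2-1}$ of norm $\pm m$ in $\Q(\sqrt{k^2-1}\,)$, whose fundamental unit is $\eps=k+\sqrt{k^2-1}$, of norm $+1$. Multiplication by $\eps$ preserves the norm and, in coordinates, is the map $(x,y)\mapsto(2kx-y,x)$, i.e.\ a Vieta jump followed by the symmetry $x\leftrightarrow y$; so the orbit of a point under the Pell conic is just its Vieta-jumping chain.

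Next comes the reduction. Writing $\overline{\alpha}$ for the conjugate, we have $|\alpha\,\overline{\alpha}|=m$ and $\overline{\eps}=\eps^{-1}$, so replacing $\alpha$ by $\eps^{n}\alpha$ multiplies $|\alpha|$ by $\eps^{n}$ and $|\overline{\alpha}|$ by $\eps^{-n}$. Choosing $n$ so that $1\le|\alpha/\overline{\alpha}|<\eps^{2}$ forces $\sqrt m\le|\alpha|<\sqrt m\,\eps$ and $\sqrt m/\eps<|\overline{\alpha}|\le\sqrt m$; since $y=(\alpha-\overline{\alpha})/(2\sqrt{k^2-1}\,)$, this bounds $|y|$. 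Equivalently, and more in the spirit of the earlier sections, I would simply run the Vieta descent of Lemma~\ref{L2} until the coordinates can no longer be decreased: after finitely many jumps one lands on a point whose coordinates are bounded by roughly $\sqrt m$.

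It then remains to finish from the reduced point, and here the two signs separate. For the $+$ sign this is exactly Theorem~\ref{TM1} with $p=2k$: the reduction yields a point with $0<x\le\sqrt m$ and $-\sqrt m\le y\le 0$; if a bound is attained with $y=0$ or $x=\sqrt m$ then $m$ is a square, and otherwise $|x|,|y|\ge 1$, whence
\[ m=x^2-2kxy+y^2=x^2+2k|xy|+y^2\ge 1+2k+1=2k+2. \]
For the $-$ sign the same unit action applies, but the relevant input is Theorem~\ref{Thpq} with $p=2k$, which already determines exactly which $m$ can occur for a negative-norm point; so no separate box argument is needed there, and one simply quotes that theorem.

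The step I expect to be the main obstacle is making the Davenport reduction quantitative: proving rigorously that the unit action terminates at a point genuinely inside the box $|x|,|y|\le\sqrt m$ (rather than merely asserting that the real curve meets that box), and then handling the boundary cases $x=\sqrt m$ and $|y|=1$ cleanly. Once the box is established, the finishing inequality displayed above is routine, and the whole argument is just the Vieta descent of Section~1 phrased through the Pell conic action.
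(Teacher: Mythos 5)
Your treatment of the $+$ sign is sound and is in substance the paper's own argument: completing the square turns the problem into bounding norms in $\Z[\sqrt{k^2-1}\,]$, and the reduction to a point with $0 < x \le \sqrt{m}$, $-\sqrt{m}\le y \le 0$ followed by the estimate $x^2 + 2k|xy| + y^2 \ge 1 + 2k + 1$ is exactly the proof of Theorem~\ref{TM1} with $p = 2k$. The quantitative form of the Davenport reduction that you flag as the main obstacle is handled in the paper's Lemma~\ref{Lem7} by a minimality trick rather than by a box: among all associates $\eps^{j}\alpha = x + y\sqrt{k^2-1}$ choose one with $|y|$ minimal; if that minimum is $y=0$ then $m = x^2$ is a square, and otherwise, since multiplication by $\eps^{\pm1}$ replaces $y$ by $\pm(x-ky)$, minimality forces $|x-ky| \ge |y|$, hence $x \ge (k+1)|y|$ or $0 \le x \le (k-1)|y|$, and the bound drops out with no limiting process and no boundary cases.

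The genuine gap is in your $-$ case. Theorem~\ref{Thpq} with $p = 2k$ and $q = -m$ only excludes the range $3-2k \le -m < 0$, i.e.\ $0 < m \le 2k-3$; quoting it therefore yields $m \ge 2k-2$, which falls short of the asserted $m \ge 2k+2$ by $4$, and nothing in your sketch closes that gap. In fact it cannot be closed: the point $(1,1)$ lies on $x^2 - 2kxy + y^2 = -(2k-2)$, and $2k-2$ is in general neither a square nor $\ge 2k+2$ (take $k=4$, $m=6$), so Proposition~\ref{Pr6} is overstated for the minus sign. The overstatement traces back to an arithmetic slip in the paper's proof of Lemma~\ref{Lem7}, where $(t-1)^2y^2 - (t^2-1)y^2$ is evaluated as $-(2t+2)y^2$ instead of the correct $-(2t-2)y^2$. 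The asymmetric statement that is actually true --- $m$ a square, or $m \ge 2k+2$ when the norm is $+m$, or $m \ge 2k-2$ when the norm is $-m$ --- is precisely what your route through Theorems~\ref{TM1} and~\ref{Thpq} delivers, and it still suffices for the second solution of the IMO problem, which uses only the $+$ sign. So you should either correct the claimed bound in the negative case or restrict the proposition to the positive sign; as written, your appeal to Theorem~\ref{Thpq} does not prove what is stated.
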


The equation $x^2 - 2kxy + y^2 = \pm m$ is equivalent to
\begin{equation}\label{Enorm}
  (x-ky)^2 - (k^2-1)y^2 = \pm m.
\end{equation}
Thus the claim follows from the next lemma, which is due to
Davenport (see \cite{Ankeny}):

\begin{lemma}\label{Lem7}
  Let $x, y, m$ be positive integers such that
  $x^2 - (t^2-1)y^2 = \pm m$. Then $m$ is a square or $|m| \ge 2t+2$.
\end{lemma}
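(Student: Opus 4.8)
The plan is to read the equation as a norm condition. Put $D = t^2-1$, so that the element $\alpha = x + y\sqrt{D}$ of $\Z[\sqrt{D}\,]$ has norm $N(\alpha) = x^2 - Dy^2 = \pm m$, while $\eps = t + \sqrt{D}$ is a unit of norm $+1$ (the fundamental solution of the associated Pell equation $X^2 - DY^2 = 1$). Since $\eps^n\alpha$ has the same norm as $\alpha$ for every $n$, I would first exploit this unit action — which is exactly the Vieta jumping/descent of the earlier sections in disguise — to normalise the solution: multiply by the power of $\eps$ for which $|\eps^n\alpha|$ lands in the fundamental annulus $[\sqrt{m},\,\eps\sqrt{m})$, and rename the result $(x,y)$. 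From $\alpha - \overline{\alpha} = 2y\sqrt{D}$ together with $|\alpha| < \eps\sqrt{m}$, $|\overline{\alpha}| \le \sqrt{m}$ and the identity $\eps - \eps^{-1} = 2\sqrt{D}$, one reads off the key size estimate $y^2 < m$ in the case $N(\alpha) = +m$, where $\alpha$ and $\overline{\alpha}$ share a sign.

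The second step is a purely algebraic factorisation that trades the awkward $t^2-1$ for $t^2$. For $N(\alpha) = +m$, rewrite $x^2 - (t^2-1)y^2 = m$ as $x^2 - t^2y^2 = m - y^2$ and factor the left side as $(x-ty)(x+ty)$. If $y = 0$ then $m = x^2$ is a square and we are done; otherwise $y \ge 1$, and since $y^2 < m$ the quantity $m - y^2$ is positive, which forces $x > ty$, hence $x \ge ty + 1$. Feeding this back gives
$$ m - y^2 = (x-ty)(x+ty) \ge 1\cdot(2ty+1) = 2ty + 1, $$
so $m \ge y^2 + 2ty + 1$, and as $y \ge 1$ the right-hand side is at least $2t+2$. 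Thus either $m$ is a square or $m \ge 2t+2$, which is the assertion; note also that $(x,y)=(t+1,1)$ realises the value $2t+2$, so the bound is sharp on this branch.

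The hard part will be the minus sign $N(\alpha) = -m$. Now $\alpha$ and $\overline{\alpha}$ have opposite signs, so $2y\sqrt{D} = \alpha - \overline{\alpha} = |\alpha| + |\overline{\alpha}|$ and the annulus normalisation yields only $|y| < \tfrac{\eps+1}{2\sqrt{D}}\sqrt{m}$, a bound whose constant exceeds $1$ rather than the clean $y^2 < m$. Correspondingly the factorisation becomes $t^2y^2 - x^2 = m + y^2$, i.e. $(ty-x)(ty+x) = m + y^2$, and this no longer pushes $m$ up to $2t+2$: indeed $(x,y) = (t-1,1)$ already gives $m = 2t-2$. So the genuine obstacle is the minus case, where the natural argument delivers only $m \ge 2t-2$; recovering the stated $2t+2$ there seems to need either restricting to the sign of the norm that actually occurs in Proposition~\ref{Pr6} (the case relevant to the IMO application, where the conic carries a positive constant) or a direct inspection of the finitely many reduced solutions. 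My concrete plan is therefore to carry out the plus case as above and to treat the minus case by checking these extremal reduced points explicitly.
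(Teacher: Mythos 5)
Your diagnosis of the minus case is not a gap in your argument but a genuine defect in the statement: Lemma~\ref{Lem7} is false as written when the norm is $-m$. Your example $(x,y)=(t-1,1)$ gives $x^2-(t^2-1)y^2=-(2t-2)$, and e.g.\ for $t=4$ this is $3^2-15=-6$ with $6$ neither a square nor $\ge 10$. The source of the trouble is an arithmetic slip in the paper's own proof: in the second branch it computes $(t-1)^2y^2-(t^2-1)y^2=-(2t+2)y^2$, whereas the correct value is $-(2t-2)y^2$, so the descent actually proves only $m\ge 2t-2$ for the negative sign. The corrected dichotomy --- $m$ square, or $m\ge 2t+2$ when $x^2-(t^2-1)y^2=+m$, or $m\ge 2t-2$ when it equals $-m$ --- is exactly what Prop.~\ref{th4} asserts (with its bound $2n-2$), and it suffices for Prop.~\ref{Pr6} as used in the second solution, since there the conic carries the positive constant $2n$. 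So your plan of proving the plus case fully and treating the minus case by inspecting the extremal reduced points is the right repair; the same correction should be propagated to the statements of Lemma~\ref{Lem7} and Prop.~\ref{Pr6}.

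On the plus case your route is correct and genuinely different from the paper's. The paper takes the orbit element with $y>0$ minimal, notes that $x-ty$ is the $y$-coordinate of the associate $(x-y\sqrt{t^2-1}\,)(t+\sqrt{t^2-1}\,)$, deduces $|x-ty|\ge y$, and splits into $x\ge(t+1)y$ versus $x\le(t-1)y$; no explicit bound on $y$ is needed. You instead normalise $|\alpha|$ into the annulus $[\sqrt{m},\eps\sqrt{m})$, extract $y^2<m$ from $2y\sqrt{t^2-1}=|\alpha|-|\alpha'|<\sqrt{m}\,(\eps-\eps^{-1})$, and then conclude by the elementary factorisation $(x-ty)(x+ty)=m-y^2\ge 2ty+1$, which gives $m\ge(y+t)^2-t^2+\,\cdots\ge 2t+2$ directly and shows sharpness at $(t+1,1)$. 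Both arguments reduce to the same inequality $x\ge ty+1$; yours buys a self-contained estimate that does not appeal to a minimal choice over the whole orbit, at the cost of the preliminary annulus computation (which is anyway Prop.~\ref{th2} in disguise). Modulo the routine sign normalisation $x,y\ge 0$, your plus-case proof is complete.
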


\begin{proof}
  The unit $\eps = t + \sqrt{t^2-1}$ has norm $1$; we have
  $$ (x - y \sqrt{t^2-1}\,)(t + \sqrt{t^2-1}\,)
     = xt - (t^2-1)y + (x-ty) \sqrt{t^2-1}, $$
  hence we have the two equations
  \begin{align*}
    x^2 - (t^2-1)y^2 & = \pm m, \\
    (xt - y(t^2-1))^2 - (x-ty)^2(t^2-1) & = n.
  \end{align*}
  Assume that we have chosen $y > 0$ minimal. Then $|x-ty| \ge y$.
  Thus either $x \ge y(t+1)$ or $0 \le x \le (t-1)y$. This implies
  \begin{align*}
     m & = x^2 - (t^2-1)y^2 \ge (t+1)^2y^2 - (t^2-1)y^2
             = (2t+2)y^2 \ge 2t+2 \quad \text{or} \\
    -m & = x^2 - (t^2-1)y^2 \le (t-1)^2y^2 - (t^2-1)y^2
             = -(2t+2)y^2  \le -(2t+2).
  \end{align*}
  This completes the proof.  
\end{proof}


\section{Elements of small norm in quadratic number fields}

Next we are going to use the arithmetic of quadratic number fields
to give a solution to the IMO problem. The relevant techniques were
studied in \cite{LP} and \cite{Zh}. Equation (\ref{Enorm})
describes elements of small norm in quadratic number fields; in this
section we will prove several results on elements of small norm in
certain real quadratic number fields; these will be used in the
sections below.

We begin by recalling a well known result (going back in a slightly
weaker form to Dirichlet and Chebyshev; see the references in \cite{LQNF}):

\begin{prop}\label{th2}
Let $k = \Q(\sqrt{m}\,)$ be a real quadratic number field,
$\eps > 1$ a unit in k, and $0 \not= \nu = |N\xi\,|$
for $\xi \in k$. Then there is a unit $\eta=\eps^{j}$
such that $\xi \eta = a+b\sqrt{m}$ and 
$$ |a| < \frac{\sqrt{\nu}}{2} \cdot B
   \quad \text{ and } \quad 
   |b| < \frac{\sqrt{\nu}}{2 \sqrt{m}}  \cdot B, 
   \quad \text{where} \quad B = \sqrt{\eps} +1/\sqrt{\eps}. $$
\end{prop}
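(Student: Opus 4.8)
The plan is to use the two real embeddings of $k=\Q(\sqrt m\,)$ together with a pigeonhole (equivalently logarithmic) choice of the exponent $j$, so that the two conjugates of $\xi\eta$ have comparable absolute values. Write $\zeta=\xi\eta=a+b\sqrt m$ and let $\zeta'=a-b\sqrt m$ be its conjugate; both are real since $m>0$. From $a=\tfrac12(\zeta+\zeta')$ and $b\sqrt m=\tfrac12(\zeta-\zeta')$ the triangle inequality gives
\begin{equation*}
  |a|\le \frac{|\zeta|+|\zeta'|}{2},\qquad |b|\le\frac{|\zeta|+|\zeta'|}{2\sqrt m}.
\end{equation*}
Because $\eta=\eps^{j}$ and $N\eps=\pm1$, we have $|N\eta|=1$, so $|\zeta|\,|\zeta'|=|N\zeta|=|N\xi|=\nu$ no matter how $j$ is chosen. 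Thus the whole problem reduces to making the sum $|\zeta|+|\zeta'|$ small under the fixed product $|\zeta|\,|\zeta'|=\nu$.

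Next I would track how the two absolute values move under multiplication by $\eps$. Since $\eps>1$ and $\eps\eps'=\pm1$ we have $|\eps'|=1/\eps$, hence $|\xi\eps^{j}|=|\xi|\,\eps^{j}$ while $|(\xi\eps^{j})'|=|\xi'|\,\eps^{-j}$. In particular the positive numbers $|\xi|\,\eps^{j}$, as $j$ runs through $\Z$, form a two-sided geometric progression of ratio $\eps$, so exactly one of them lies in the interval $[\sqrt{\nu/\eps}\,,\ \sqrt{\nu\eps}\,)$, whose ratio of endpoints is $\eps$. Taking $\eta=\eps^{j}$ to be this value and writing $|\zeta|=\sqrt{\nu}\,t$ with $t\in[1/\sqrt\eps\,,\ \sqrt\eps\,)$, the fixed product forces $|\zeta'|=\sqrt{\nu}/t$, so that
\begin{equation*}
  |\zeta|+|\zeta'|=\sqrt{\nu}\,\Bigl(t+\tfrac1t\Bigr).
\end{equation*}
The function $t+1/t$ is convex with minimum at $t=1$, so on $[1/\sqrt\eps\,,\ \sqrt\eps\,]$ it is maximized at the endpoints, where it equals $\sqrt\eps+1/\sqrt\eps=B$. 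Therefore $|\zeta|+|\zeta'|\le\sqrt{\nu}\,B$, and feeding this into the two displayed inequalities produces precisely the asserted bounds on $|a|$ and $|b|$.

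The only subtle point is the \emph{strictness} of the inequalities. The two triangle inequalities cannot both be equalities: depending on the sign of $N\zeta$, one of $|a|,|b|\sqrt m$ equals $\tfrac12(|\zeta|+|\zeta'|)$ while the other equals $\tfrac12\bigl||\zeta|-|\zeta'|\bigr|$, and since $\nu\ne0$ forces $\zeta,\zeta'\ne0$ the second quantity is genuinely smaller. The large term reaches $\tfrac{\sqrt\nu}{2}B$ only when $t$ sits exactly at an endpoint, that is, when one of the numbers $|\xi|\eps^{j}$ falls precisely on the boundary of the interval; this forces $\zeta'=\pm\eps\,\zeta$, i.e.\ $\xi$ and its conjugate differ by a unit (so that $(\xi)$ is a conjugation-invariant ideal). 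I expect this borderline alignment to be the main obstacle to a clean ``$<$'': I would handle it by excluding this degenerate case separately, or by replacing the closed endpoint with an open one and checking that in the residual coincidence the relevant coordinate still lies strictly below its bound. The remaining ingredients—the pigeonhole selection of $j$ and the one-variable maximization of $t+1/t$—are entirely routine.
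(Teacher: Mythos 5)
Your argument is essentially the paper's own: the same pigeonhole choice of $j$ placing $|\xi\eta|$ in $[\sqrt{\nu/\eps}\,,\sqrt{\nu\eps}\,)$, the same identities $2a=\zeta+\zeta'$ and $2b\sqrt m=\zeta-\zeta'$, and your direct maximization of $t+1/t$ over that interval is precisely the content of the Cassels-type lemma the paper invokes at this point. The endpoint issue you flag is genuine rather than a defect of your write-up --- with the half-open normalization the conjugate can land exactly on the closed end, and e.g.\ $\xi=1+\sqrt2$, $\eps=3+2\sqrt2$ in $\Q(\sqrt2\,)$ gives $|b|=1$ exactly equal to the stated bound for every admissible $j$, so the strict inequality really should be a $\le$ --- a point the paper's own proof glosses over.
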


For $\alpha \in k$ let $\alpha '$ denote the conjugate of $\alpha $.
Thus $N \xi = \xi \xi'$, where $N = N_{k/\Q}$ denotes the absolute norm.

Now let $ \xi \in k$ and a real positive number $c$ be given;
we can find a unit $ \eta \in  E_{k}$ such that
\begin{equation}\label{E1}
   c \leq |\,\xi \eta\,| < c \eps.
\end{equation}
With $\nu = |N\xi|$ we find $ |\xi' \eta'| = \nu/|\xi \eta|$,
so equation (\ref{E1}) yields
\begin{equation}
  \frac{\nu}{c \eps} \leq |\,\xi' \eta'\,| < \frac{\nu}{c}.  
\end{equation}
Writing $ \alpha = \xi \eta = a + b \sqrt{m}$ gives
\begin{align*}
  |2a| & = |\alpha + \alpha '| \leq |\alpha|+|\alpha '|
         < c \eps + \frac{\nu}{c}, \quad \text{and} \\
  |2b\sqrt{m}\,| & = |\alpha -\alpha '| \leq |\alpha|+|\alpha'|
         < c\eps + \frac{\nu}{c}.
\end{align*}
Because we want the coefficients $a$ and $b$ to be as small
as possible, we have to choose $c$ in such a way that
$ c\eps + \nu/c $ becomes a minimum. Putting $c = \sqrt{\nu/\eps} $ we get
\begin{equation}
|2a| < 2\sqrt{\nu\eps},\quad |2b\sqrt{m}\,| < 2\sqrt{\nu\eps}.
\end{equation}
Making use of a lemma due to Cassels, we can improve these bounds:

\begin{lemma}\label{th1}
Suppose that the positive real numbers $x, y$ satisfy the
inequalities $x \leq s$, $y \leq s$, and $xy \leq t$. Then
$x+y \leq s+t/s$.
\end{lemma}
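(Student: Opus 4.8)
The plan is to reduce the whole inequality to the nonnegativity of a single product, so that no optimization or case analysis is needed. First I would use the two bounds $x \le s$ and $y \le s$ in the most direct possible way: they say precisely that $s - x \ge 0$ and $s - y \ge 0$. Since the product of two nonnegative reals is nonnegative, this immediately gives $(s-x)(s-y) \ge 0$.

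Expanding the product yields $s^2 - s(x+y) + xy \ge 0$, which I would rearrange into $s(x+y) \le s^2 + xy$. At this point the third hypothesis enters: substituting $xy \le t$ gives $s(x+y) \le s^2 + t$. Because $s > 0$, I can divide through by $s$ to obtain $x + y \le s + t/s$, which is exactly the assertion. The two bounds and the product bound thus each contribute once, and the argument terminates in two lines of algebra.

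The only real obstacle is recognizing that all three hypotheses combine through the single device $(s-x)(s-y) \ge 0$; once that product is written down, everything else is routine. I would deliberately avoid the alternative route of maximizing $x+y$ on the feasible region and splitting into the cases $t \ge s^2$ and $t < s^2$, since the factorization dispatches every configuration simultaneously and keeps the proof short. It is also worth observing that equality holds exactly when $(s-x)(s-y) = 0$ and $xy = t$, that is, when one of $x,y$ equals $s$ and the other equals $t/s$; this is the extremal configuration relevant to tightening the bounds on the coefficients $a$ and $b$ obtained just before the lemma.
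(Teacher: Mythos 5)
Your proof is correct and is essentially identical to the paper's: both rest on expanding the nonnegative product $(x-s)(y-s)=(s-x)(s-y)\ge 0$ to get $s(x+y)\le s^2+xy\le s^2+t$ and dividing by $s>0$. Nothing further is needed.
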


\begin{proof}
  We have $0 \leq (x-s)(y-s) = xy-s(x+y)+s^{2} \leq s^{2}+t-s(x+y)$.
\end{proof}

Putting $x=|\alpha|$ and $y=|\alpha'|$ in Lemma \ref{th1}
we find
\begin{align*}
  |2a| & \leq |\alpha|+|\alpha'|< \sqrt{\nu\eps}+ \sqrt{\nu/\eps},
  \quad \text{and} \\
  |2b \sqrt{m}\,| & < \sqrt{\nu\eps}+\sqrt{\nu/\eps}.
\end{align*}
This proves our claims.

\begin{prop}\label{th4}
  Let $m$ and $n \ge 2$  be natural numbers such that $m = n^2-1$; if
  the diophantine equation $|x^{2}-my^{2}| = \nu$ has solutions in
  $\Z$ for some integer $\nu$ with $0 < \nu < 2n-2$, then $\nu$ is a
  perfect square.
\end{prop}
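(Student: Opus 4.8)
The plan is to apply Proposition~\ref{th2} to the field $k = \Q(\sqrt{m}\,)$ with the \emph{specific} unit $\eps = n + \sqrt{m}$ and then let the numerics do the work. The first observation is that $\eps$ really is a unit: since $m = n^2-1$ we have $N\eps = n^2 - m = 1$, so $\eps = n + \sqrt{m}$ is a unit of norm $+1$ (in fact the fundamental unit of the Richaud--Degert field $\Q(\sqrt{n^2-1}\,)$). Because $N\eps = 1$, its inverse is its conjugate $\eps^{-1} = n - \sqrt{m}$, and therefore the constant appearing in Proposition~\ref{th2} satisfies $B^2 = \eps + \eps^{-1} + 2 = 2n+2$, i.e. $B = \sqrt{2n+2}$.

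Next I would feed $\xi = x + y\sqrt{m}$ into Proposition~\ref{th2}, with $\nu = |x^2 - my^2| = |N\xi|$. This produces an associate $\alpha = \xi\eta = a + b\sqrt{m}$ of the same norm satisfying, in particular,
\[ |b| < \frac{\sqrt{\nu}}{2\sqrt{m}}\,B = \frac{\sqrt{\nu(2n+2)}}{2\sqrt{m}}. \]
Here $a$ and $b$ are genuine integers: $\xi$ lies in $\Z[\sqrt{m}\,]$, and so does $\eta = \eps^j$, since both $\eps = n+\sqrt{m}$ and $\eps^{-1} = n-\sqrt{m}$ belong to $\Z[\sqrt{m}\,]$; hence their product does too.

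The point now is that $m = (n-1)(n+1)$ and $2n+2 = 2(n+1)$, so the displayed bound on $|b|$ is strictly less than $1$ precisely when $\nu(2n+2) < 4m$, that is, when $\nu < 2(n-1) = 2n-2$. This is exactly the hypothesis of the proposition, so we conclude $|b| < 1$ and hence $b = 0$. But with $b = 0$ the norm collapses to $\nu = |a^2 - m\cdot 0^2| = a^2$, which is a perfect square, as claimed.

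The only real content is the identity $B^2 = 2n+2$ together with the fact that the threshold it produces, namely $\nu < 2n-2$, matches the hypothesis on the nose; everything else (the integrality of $a$ and $b$, and the final collapse of the norm) is routine. So I expect the main step to be recognizing that Proposition~\ref{th2}, applied with the unit $n+\sqrt{m}$ rather than with an arbitrary one, is already sharp enough to force $b = 0$, and that no further descent or case analysis is needed.
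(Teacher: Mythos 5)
Your proposal is correct, and it rests on the same pillar as the paper's own argument: apply Proposition~\ref{th2} to $\xi = x+y\sqrt{m}$ with the parametrized unit $\eps = n+\sqrt{m}$ of norm $+1$, so that $B^2 = \eps + \eps^{-1} + 2 = 2n+2$ and the coefficient bound becomes $|b| < \sqrt{\nu/(2n-2)}$. Where you diverge is the endgame. The paper is more conservative: it only extracts $|b|\le 1$ from the bound and must then dispose of the case $b=\pm 1$ by checking that $|a^2-m|$ takes the values $2n-2$, $1$, $2n+2$ at $a = n-1, n, n+1$ (and larger values elsewhere), so that $b=\pm 1$ forces $\nu = 1$ or $\nu \ge 2n-2$. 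You instead observe that the strict hypothesis $\nu < 2n-2$ already makes the bound on $|b|$ strictly less than $1$, so $b=0$ outright and $\nu = a^2$ with no case analysis. Your computation checks out ($\nu(2n+2) < (2n-2)(2n+2) = 4m$), and your remark that $a,b\in\Z$ because $\eps^{\pm 1} = n\pm\sqrt{m} \in \Z[\sqrt{m}\,]$ is the right justification. The only thing you lose relative to the paper's longer route is the extra information that the threshold $2n-2$ is attained (by $a = n-1$, $b=1$), i.e.\ that the proposition is sharp; but that is not needed for the statement as given.
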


\begin{proof}
Let $\xi = x+y\sqrt{m}$; then $|\,N \xi\,|=N$, and since
$\eps = n + \sqrt{m}\,>1$ is a unit in $\Z[\sqrt{m}\,]$,
we can find a power $\eta$ of $\varepsilon$ such that
$\xi \eta=a+b\sqrt{m}$ has coefficients $a, \ b$ that satisfy
the bounds in Prop.~\ref{th2}. Since the bound is a monotone decreasing
function of $n$, it attains its maximum at $n = 2$. Since $b$ is an integer,
we must have $|b| \le 1$.

If $b=0$, then $|N\xi|=a^{2}$ is a square. Assume therefore that
$b=\pm 1$; this yields $\alpha = \xi \eta = a \pm \sqrt{m}.$
Now $|N\xi| = |N\alpha| = |a^{2}-m|$ is minimal for
values of $a$ near $\sqrt{m}$, and we find
$$ |a^{2}-m| = \begin{cases}
  2n-2 & \text{ if } a=n-1; \\
   1   & \text{ if } a=n;  \\
  2n+2 & \text{ if } a=n+1. \end{cases} $$
This proves the claim.
\end{proof}

\begin{prop}\label{th4b}
  Let $n \ge 7$ be an odd integer and set $m = n^2 - 4$.
  If $|x^2 - my^2| = \nu$ has a solution in integers with $N < n+2$,
  then either $\nu = n-2$ or $\nu$ is a perfect square.
\end{prop}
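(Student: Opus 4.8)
The plan is to imitate the proof of Proposition~\ref{th4}, the essential new feature being that $m = n^2-4 \equiv 5 \pmod 8$ (since $n$ is odd), so that the maximal order is $\Z[\frac{1+\sqrt m}{2}]$ and reduced elements may carry half-integer coordinates. First I would set $\xi = x+y\sqrt m$, so $\nu = |N\xi|$, and take as the unit in Proposition~\ref{th2} the fundamental unit $\eps = \frac{n+\sqrt m}{2}$, which has norm $1$. The point of this choice is that $\Tr\eps = \eps + \eps^{-1} = n$, so the constant $B = \sqrt\eps + 1/\sqrt\eps$ satisfies $B^2 = \eps + 2 + \eps^{-1} = n+2$, i.e.\ $B = \sqrt{n+2}$.

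Applying Proposition~\ref{th2} then yields a unit $\eta = \eps^{j}$ such that $\alpha = \xi\eta = \frac{u+v\sqrt m}{2}$ with $u \equiv v \pmod 2$, and the coefficient $v/2$ of $\sqrt m$ is bounded by
$$ \Big|\tfrac v2\Big| < \frac{\sqrt\nu}{2\sqrt m}\cdot B
   < \frac{\sqrt{n+2}}{2\sqrt{n^2-4}}\cdot\sqrt{n+2}
   = \frac12\sqrt{\frac{n+2}{n-2}}, $$
using $\nu < n+2$ and $m = (n-2)(n+2)$. For every $n \ge 4$ the right-hand side is $< 1$, so $|v| \le 1$. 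If $v = 0$, then $u$ is even, $\alpha = u/2 \in \Z$, and $\nu = |N\alpha| = (u/2)^2$ is a perfect square; hence I may assume $v = \pm 1$, which gives $\alpha = \frac{u \pm \sqrt m}{2}$ with $u$ odd and $\nu = \frac{|u^2 - m|}{4}$.

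It remains to determine the odd integers $u$ with $\nu = \frac{|u^2 - (n^2-4)|}{4} < n+2$. This inequality is equivalent to $n^2 - 4n - 12 < u^2 < (n+2)^2$, and I would simply run through the finitely many admissible odd $|u|$. One finds $|u| = n$ giving $\nu = 1$, and $|u| = n-2$ giving $\nu = n-2$; the next candidate $|u| = n-4$ yields $\nu = 2n-5$, and this is exactly where $n \ge 7$ enters, since $2n-5 \ge n+2$ precisely when $n \ge 7$, so this value (and all smaller $|u|$) violate the constraint $\nu < n+2$. Thus $\nu = 1$ or $\nu = n-2$, as claimed.

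The step I expect to demand the most care is the bookkeeping with half-integers: one must apply the bound of Proposition~\ref{th2} to $v/2$ rather than to $v$, so that the conclusion $|v|\le 1$ survives, and then carry out the $u$-analysis without dropping a factor of $2$ in $\nu = \frac14|u^2-m|$. The threshold $n \ge 7$ is sharp for the stated conclusion: for $n = 5$ the element $\frac{1+\sqrt{21}}{2}$ has $|N| = 5$, which is neither a perfect square nor equal to $n-2 = 3$, so the proposition genuinely fails there. I would use this as a consistency check that the case distinction for $u$ has been done correctly.
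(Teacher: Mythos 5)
Your argument is correct and is precisely the proof the paper leaves to the reader: it applies Proposition~\ref{th2} with the norm-one unit $\eps = \frac{n+\sqrt{m}}{2}$ (so $B^2 = \operatorname{Tr}\eps + 2 = n+2$), correctly tracks the half-integral coordinates of the maximal order to get $|v|\le 1$, and the surviving elements $\frac{n\pm\sqrt m}{2}$ and $\frac{(n-2)\pm\sqrt m}{2}=\eps-1$ with norms $\pm 1$ and $2-n$ are exactly the ones the paper tabulates after the statement, with $2\eps\pm 1$ excluded for $n\ge 7$ just as you observe via $2n-5\ge n+2$. The $n=5$ counterexample $32^2-21\cdot 7^2=-5$ confirms the sharpness of the hypothesis.
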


The proof is left to the readers.
Observe that $\eps = \frac{n + \sqrt{m}}{2}$ is a unit with norm $+1$;
nontrivial elements with small norm are
$$ \begin{array}{c|cccc}
  \rsp \alpha  & \eps+1 & \eps-1 & 2\eps+1 & 2\eps-1 \\ \hline 
  \rsp N\alpha & 2 + n  &  2 - n & 5 + 2n & 5 - 2n 
  \end{array} $$

\begin{prop}\label{th5}
  Let $m, n, t$ be natural numbers such that $m=n^2+2$ and $n \ge 5$;
  if the diophantine equation $|x^2 - my^2| = \nu$ has solutions in $\Z$
  with $\nu < 2n+1$, then either $\nu = 2n-1$, or $\nu$ or $2 \nu$ is a
  perfect square.
\end{prop}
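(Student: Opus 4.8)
The plan is to follow the pattern of Proposition~\ref{th4}, the one genuinely new feature being that for $m = n^2+2$ the fundamental unit has size $\approx 2n^2$ rather than $\approx 2n$; consequently the reduction furnished by Proposition~\ref{th2} no longer collapses the second coordinate to $|b|\le 1$, but only to $b^2 \le n$, and a whole (short) family of values of $b$ must be inspected.

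First I would fix the arithmetic of $\OO = \Z[\sqrt m\,]$ (note $m = n^2+2 \equiv 2,3 \bmod 4$). The obvious small element $\alpha = n+\sqrt m$ has norm $n^2 - m = -2$, and since $\alpha^2 = 2\bigl((n^2+1)+n\sqrt m\,\bigr)$, the number $\eps = (n^2+1)+n\sqrt m$ is a unit of norm $+1$ satisfying $2n^2 < \eps < 2n^2+2$; it is in fact the fundamental unit, as the continued fraction of $\sqrt m$ has period two. I expect this to be the main obstacle: in contrast to Propositions~\ref{th4} and~\ref{th4b}, the natural small element $n+\sqrt m$ is not a unit but only ``half'' of one, so one has to recognise $\eps$ and, more to the point, accept that its large size will yield only the weak bound $b^2\le n$.

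Next, given a solution $\xi = x+y\sqrt m$ of $|x^2-my^2| = \nu$ with $0 < \nu < 2n+1$, I would apply Proposition~\ref{th2} with this $\eps$ to replace $\xi$ by $\xi\eta = a+b\sqrt m$ for a suitable $\eta = \eps^{\,j}$; since $N\eta = 1$ the value $\nu$ is unchanged, and the proposition gives $|b| < \frac{\sqrt\nu}{2\sqrt m}B$ with $B = \sqrt\eps + 1/\sqrt\eps$. Squaring and inserting $\nu < 2n+1$, $\eps + 2 + \eps^{-1} < 2n^2+5$ and $m = n^2+2$ gives $b^2 < \frac{(2n+1)(2n^2+5)}{4(n^2+2)} < n+1$, hence $b^2 \le n$. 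After possibly replacing $a+b\sqrt m$ by one of $\pm a\pm b\sqrt m$ I may assume $a,b\ge 0$, and if $b = 0$ then $\nu = a^2$ is a square and we are done.

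Finally, for $b\ge 1$ I would write $a = bn+r$ and exploit $m = n^2+2$ to obtain the clean identity $a^2 - b^2 m = 2bnr + r^2 - 2b^2$, so that $\nu = |2bnr+r^2-2b^2|$. A short estimate using $b\ge 1$ and $b^2\le n$ shows that $|r|\ge 2$ forces $\nu > 2n+1$, leaving $r\in\{-1,0,1\}$. Here $r = 0$ gives $\nu = 2b^2$, whence $2\nu = (2b)^2$; the value $r = -1$ gives $\nu = 2bn+2b^2-1 \ge 2n+1$, which is excluded; and $r = 1$ gives $\nu = 2b(n-b)+1$, which for $b\ge 2$ is at least $4n-7 \ge 2n+1$ (as $n\ge 5$) and so lies in range only for $b = 1$, where it equals $2n-1$. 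This exhausts the cases and produces exactly the claimed trichotomy.
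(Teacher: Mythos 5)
Your argument is correct, but it takes a genuinely different route at exactly the point the paper singles out as the difficulty. The paper's proof refuses to work with the weak bound coming from the fundamental unit $\eps = n^2+1+n\sqrt{m}$: it reduces instead modulo the element $\delta = n+\sqrt{m}$ of norm $-2$ (with $\delta^2 = 2\eps$), placing $|\xi\eta|$ in an interval of multiplicative length $\sqrt{\eps}$ and splitting into two subcases according to which half it lands in; in the second subcase it multiplies by $\delta$, doubling the norm, and in both subcases Cassels' lemma with the constant $\sqrt[4]{\eps}+\eps^{-1/4}$ forces $|b|\le 1$, after which a short table of norms of $a\pm\sqrt{m}$ finishes the proof. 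You instead accept the bound that Proposition~\ref{th2} actually delivers with the full unit, namely $b^2 < (2n+1)(2n^2+5)/(4(n^2+2)) < n+1$, hence $b^2\le n$, and compensate by a direct computation: writing $a=bn+r$ and using $a^2-mb^2 = 2bnr+r^2-2b^2$ together with $b^2\le n$ to eliminate $|r|\ge 2$ and $r=-1$ (I checked: $r\ge 2$ gives $\nu\ge 2n+4$, $r\le -2$ gives $\nu\ge 4n-2$, $r=-1$ gives $\nu\ge 2n+1$, and $r=1$ with $b\ge 2$ gives $\nu\ge 4n-7>2n$ for $n\ge 5$), leaving exactly the three outcomes $\nu=a^2$, $2\nu=(2b)^2$, and $\nu=2n-1$. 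Your version is more elementary and avoids both $\delta$ and the two-interval case split; it succeeds because $m=n^2+2$ keeps $b\sqrt{m}$ within $b/n\le 1/\sqrt{n}$ of $bn$ for every admissible $b$, so the whole range $1\le b\le\sqrt{n}$ collapses to $r\in\{-1,0,1\}$. The paper's half-unit refinement is sharper ($|b|\le 1$ outright) and is the device that transfers to other Richaud--Degert situations where such a small element of norm $\pm 2$ exists, so the two proofs are complementary rather than interchangeable in spirit.
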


Here $\delta = n + \sqrt{n^2+2}$ has norm $N\delta  = -2$, and
$\delta^2 = 2\eps$ for the fundamental unit $\eps = n^2+1 + n\sqrt{n^2+2}$.
Since $\eps \approx 2n^2+2$, the bounds in Prop.~\ref{th2}   are not good enough 
for our proof to work. Observe that $\sqrt{\eps} + \frac1{\sqrt{\eps}} = \sqrt{2n^2 +4}$ 
and $\sqrt[4]{\eps} + \frac1{\sqrt[4]{\eps}} = \sqrt{\sqrt{2n^2 + 4} + 2}$.

\begin{proof}
Proof.  Let $\xi= x+y\sqrt{m}, \nu=|N\xi|$,
and suppose that neither $\nu$ nor $2\nu$ are perfect squares.
Letting $\delta = t + \sqrt{m}$, we find $\delta^2 = 2\eps$,
where $\eps$ is a unit in $\Z[\sqrt{m}\,]$.
Obviously we can find a power $\eta$ of $\eps$ such that
\[\sqrt{\nu \sqrt{\eps}}/\eps \le |\xi\eta|  < \sqrt{\nu\sqrt{\eps}}.\]
Now we distinguish two cases:
\begin{itemize}
\item[1.] $\quad \sqrt{\nu/\sqrt{\eps}} \le |\xi \eta| <  \sqrt{\nu \sqrt{\eps}}:$ 
  Writing $\xi\eta = a+b\sqrt{m}$, we find that $|b| \le 1$. In fact,
  $$ | 2b\sqrt{m}\,| = |\xi\eta  - \xi'\eta'| \le  |\xi\eta\,| + | \xi'\eta'|
                              \le \sqrt{\nu} \cdot B $$  
  with 
  $$ B = \sqrt[4]{\eps} + \frac1{\sqrt[4]{\eps}} = \sqrt{\sqrt{2n^2 + 4} + 2}. $$
 Thus
  $$ |b| \le \frac{\sqrt{2n+1}}{\sqrt{n^2+2}} \cdot \sqrt{\sqrt{2n^2 + 4} + 2}. $$
This bound is monotone decreasing for $n \ge 2$ and $< 2$ for $n \ge 5$.
  If $b=0$, then $b$ is a square, so assume $b=\pm 1$. Here the elements $\alpha$ with minimal norm ar
  $$ \begin{array}{c|ccc}
         \rsp \alpha    & n-1  + \sqrt{n^2+2}   &   n + \sqrt{n^2+2}    & n+1  + \sqrt{n^2+2} \\  \hline
         \rsp N\alpha &  - 2n - 1  &            - 2                &             2n - 1                
      \end{array} $$    
  \item[2.] $\quad \sqrt{\nu\sqrt{\eps}}/\eps   \le |\,\xi \eta\,| < \sqrt{\nu/\sqrt{\eps}}:$ 
   Multiplying $\xi \eta$ with $\delta$ we get
   $$ \sqrt{2\nu/\sqrt{\eps}} \le |\,\xi \eta \delta\,| < \sqrt{2\nu \sqrt{\eps}}. $$
   As in case 1. above, we find $|b| \le 1$, where
   $\xi \eta \delta = a+b\sqrt{m}$; if $b=0$, then
   $2\nu=|N(\xi \eta \delta)|=a^{2}$ is a square. If $b=\pm 1$, then
   $a^{2}-mb^{2}$ must be even (because $|N\xi\delta|$ is even), 
   or $|N\xi\delta| \ge 2n-1$.    
\end{itemize}
This completes the proof.
\end{proof}

\subsection*{Third solution using quadratic number fields}

We now translate the IMO problem into a problem about norms
of elements in quadratic number fields.

Assume first that $m = k^2-4$ is even; then so is $k$, and we can write
$k = 2n$ for some integer $n$. If $x^2 - kxy + y^2 = k$, then
$x^2 - 2nxy + y^2 = 2n$ can be written in the form
$(x-ny)^2 - (n^2-1)y^2 = 2n$. By Prop.~\ref{th4}, $k = 2n$ is a
perfect square.

If $k$ is odd, then multiplying  $x^2 - kxy + y^2 = k$ through
by $4$ and completing the square shows that
$$ (2x - ky)^2 - (k^2-4)y^2 = 4k. $$
In this case, $\frac12(2x-ky+y\sqrt{m}\,)$ is an element with norm $k$,
and the claim follows from Prop.~\ref{th4b}.

The proof of the next result uses Prop.~\ref{th5} and is left to the reader:

\begin{prop}
  If there are integers $x, y$ such that $k = \frac{x^2 + 2y^2}{2xy+1}$ is
  an integer, then $k$ is a square or twice a square.
\end{prop}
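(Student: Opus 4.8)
The plan is to turn the hypothesis into a norm equation of the type handled in Prop.~\ref{th5} and then adapt that proof. Clearing denominators in $k(2xy+1) = x^2+2y^2$ gives $x^2 - 2kxy + 2y^2 = k$, and completing the square yields
$$ (x-ky)^2 - (k^2-2)y^2 = k. $$
Thus, with $m = k^2 - 2$, the element $\xi = (x-ky) + y\sqrt{m}$ of $\Q(\sqrt{m}\,)$ has norm $N\xi = k$. As in the earlier solutions I would assume $x, y > 0$, so that $k > 0$ and the norm is a genuinely positive integer $\nu = k$; keeping track of this sign is what will eliminate the analogue of the exceptional value $2n-1$ of Prop.~\ref{th5}. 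The arithmetic here is the exact mirror of that proposition: $\delta = k + \sqrt{m}$ has norm $N\delta = +2$, the fundamental unit is $\eps = (k^2-1) + k\sqrt{m}$ of norm $+1$, and $\delta^2 = 2\eps$; moreover $\eps + \eps^{-1} = 2(k^2-1)$, so $\sqrt{\eps} + 1/\sqrt{\eps} = k\sqrt{2}$.

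I would then run the reduction of Prop.~\ref{th5} essentially verbatim. After multiplying $\xi$ by a suitable power $\eta$ of $\eps$, and by $\delta$ in the second of the two ranges used there, Cassels' Lemma~\ref{th1} applied to the pair of conjugate absolute values bounds the coefficient $b$ of $\sqrt{m}$ in the reduced element $a + b\sqrt{m}$ by
$$ |b| \le \frac{\sqrt{\nu}}{2\sqrt{m}}\Bigl(\sqrt[4]{\eps} + \frac{1}{\sqrt[4]{\eps}}\Bigr), \qquad \Bigl(\sqrt[4]{\eps} + \frac{1}{\sqrt[4]{\eps}}\Bigr)^2 = k\sqrt{2} + 2. $$
Since $\nu = k$ and $m = k^2 - 2 \approx k^2$, the right-hand side is below $1$ as soon as $k$ is moderately large; in any case it gives $|b| \le 1$.

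It remains to read off the conclusion. If $b = 0$, then the reduced element is the rational integer $a$, and its norm is $\nu = k$ (no factor $\delta$ used), forcing $k = a^2$ to be a square, or it is $2\nu = 2k$ (when $\delta$ was used), forcing $2k = a^2$, so that $a$ is even and $k$ is twice a square. If $b = \pm 1$, the reduced element is $a \pm \sqrt{m}$ with positive norm $a^2 - m$, so $a \ge k$; but $a^2 - m$ equals $2$ at $a = k$ and already exceeds $2k+1$ at $a = k+1$, so the norm ($k$ or $2k$) must equal $2$, leaving only $k = 2$ (twice a square) or $k = 1$ (a square). The handful of small values not covered by the bound are checked by hand; in particular $k = 3$ does not occur, as $X^2 - 7Y^2 = 3$ is insoluble modulo $7$. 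The main obstacle, precisely as in Prop.~\ref{th5}, is the quantitative estimate for $|b|$: the bounds of Prop.~\ref{th2} are too weak because $\eps$ is of size $2k^2$, so one must use the fourth-root refinement, choose the ranges correctly, and, in the $\delta$-range, invoke the parity of the norm.
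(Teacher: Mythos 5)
Your proof is correct, and it follows the route the paper intends --- the paper offers no argument of its own beyond the remark that the proof ``uses Prop.~\ref{th5} and is left to the reader.'' In fact you have repaired a gap in that pointer: completing the square in $x^2-2kxy+2y^2=k$ puts you in $\Q(\sqrt{k^2-2}\,)$, a field of Richaud--Degert type $t^2-2$, whereas Prop.~\ref{th5} is stated for $m=n^2+2$ and therefore does not apply verbatim; your mirrored version, with $\delta=k+\sqrt{m}$ of norm $+2$ and $\delta^2=2\eps$, is exactly the statement that is needed. The decisive extra observation is that $N\xi=k$ is genuinely positive, so a reduced element $a\pm\sqrt{m}$ has norm $2$ or at least $2k+3$, never $-(2k-3)$; without tracking the sign, $k=3$ (where $x^2-7y^2=-3$ is solvable by $x=2$, $y=1$) would survive as a phantom exception, playing the role of the value $2n-1$ in Prop.~\ref{th5}. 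The quantitative steps check out: $\sqrt{\eps}+1/\sqrt{\eps}=k\sqrt{2}$, and the Cassels bound forces $|b|\le 1$ in both ranges for every $k\ge 2$ (in the first range it even forces $b=0$), while the leftover values $k=1$ (where $m=-1$) and $k=2$ are already of the allowed form. The only caveat is that, as literally stated, the proposition requires $xy\ge 0$ --- e.g.\ $(x,y)=(1,-1)$ gives $k=-3$ --- so your standing assumption $x,y>0$ is the right reading of the statement.
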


With computer experiments it is easy to discover a wealth of
problems similar to the original IMO problem. Some of them can
be proved with the methods presented here, others seem to require
new ideas.

\end{document}